\newtheorem{theorem*}{Theorem}[subsection]
\newtheorem{lemma*}{Lemma}[subsection]
\newtheorem{definition*}{Definition}[subsection]
\let\e=\varepsilon
\let\p=\partial
\let\O=\Omega
\let\hide\iffalse
\let\unhide\fi
\newcommand{\R}{\mathbb{R}}
\renewcommand{\S}{\mathbb{S}}
\newcommand{\be}{\begin{equation}}
\newcommand{\bm}{\begin{multline}}
\newcommand{\ee}{\end{equation}}
\newcommand{\dd}{\mathrm{d}}
\newcommand{\xb}{x_{\mathbf{b}}}
\newcommand{\tb}{t_{\mathbf{b}}}
\newcommand{\vb}{v_{\mathbf{b}}}
\newcommand{\tbpm}{t_{\mathbf{b},\iota}}
\newcommand{\vbpm}{v_{\mathbf{b},\iota}}
\newcommand{\xbpm}{x_{\mathbf{b},\iota}}
\newcommand{\Bes}{\begin{eqnarray*}}
\newcommand{\Ees}{\end{eqnarray*}}
\newcommand{\Be}{\begin{equation} }
\newcommand{\Ee}{\end{equation}}
\newcommand{\Bs}{\begin{split}}
\newcommand{\Es}{\end{split}}
\def\p{\partial}
\def\O{\Omega}
\def\R{\mathbb{R}}
\def\B{\begin{equation}}
\def\E{\end{equation}}
\def\BN{\begin{eqnarray*}}
\def\EN{\end{eqnarray*}}
\begin{document}

\title*{On Some Recent Progress in the Vlasov-Poisson-Boltzmann System with Diffuse Reflection Boundary}
\titlerunning{The Vlasov-Poisson-Boltzmann system with Diffuse reflection boundary} 
\author{Yunbai Cao and Chanwoo Kim}
\institute{Yunbai Cao \at  Department of Mathematics, University of Wisconsin-Madison, Madison, WI, 53706, USA, \email{ycao35@wisc.edu}
\and Chanwoo Kim \at  Department of Mathematics, University of Wisconsin-Madison, Madison, WI, 53706, USA \email{ckim.pde@gmail.coms}}
%
%
\maketitle

\abstract*{Each chapter should be preceded by an abstract (no more than 200 words) that summarizes the content. The abstract will appear \textit{online} at \url{www.SpringerLink.com} and be available with unrestricted access. This allows unregistered users to read the abstract as a teaser for the complete chapter.
Please use the 'starred' version of the \texttt{abstract} command for typesetting the text of the online abstracts (cf. source file of this chapter template \texttt{abstract}) and include them with the source files of your manuscript. Use the plain \texttt{abstract} command if the abstract is also to appear in the printed version of the book.}

\abstract{We discuss some recent development on the Vlasov-Poisson-Boltzmann system in bounded domains with diffuse reflection boundary condition. In addition we present a new regularity result when the particles are surrounded by conductor boundary.}

\section{Some recent development}
\subsection{Background}

The object of kinetic theory is the modeling of particles by a distribution function in the phase space, which is denoted by $F(t,x,v)$ for $(t,x,v) \in [0, \infty) \times  {\O} \times \R^{3}$ where $\O$ is an open bounded subset of $\R^{3}$. Dynamics and collision processes of dilute charged particles with an electric field $E$ can be modeled by the (two-species) Vlasov-Poisson-Boltzmann equation
\Be \label{2FVPB}
\begin{split}
\p_t F_+ + v \cdot \nabla_x F_+ +E \cdot \nabla_v F_+ = Q(F_+,F_+) + Q(F_+,F_- ),
\\ \p_t F_- + v \cdot \nabla_x F_- -E \cdot \nabla_v F_- = Q(F_-,F_+) + Q(F_-,F_- ).
\end{split} \Ee
Here $F_\pm(t,x,v) \ge 0 $ are the density functions for the ions $(+)$ and electrons $(-)$ respectively.


The collision operator measures ``the change rate'' in binary hard sphere collisions and takes the form of
\Be\begin{split}\label{Q}
Q(F_{1},F_{2}) (v)&: = Q_\mathrm{gain}(F_1,F_2)-Q_\mathrm{loss}(F_1,F_2)\\
&: =\int_{\R^3} \int_{\S^2} 
|(v-u) \cdot \omega| [F_1 (u^\prime) F_2 (v^\prime) - F_1 (u) F_2 (v)]
 \dd \omega \dd u,
\end{split}\Ee   
where $u^\prime = u - [(u-v) \cdot \omega] \omega$ and $v^\prime = v + [(u-v) \cdot \omega] \omega$. The collision operator enjoys a collision invariance: for any measurable $G$,  $
\int_{\R^{3}} \begin{bmatrix}1 & v & \frac{|v|^{2}-3}{2}\end{bmatrix} Q(G,G) \dd v = \begin{bmatrix}0 & 0 & 0 \end{bmatrix}.$ It is well-known that a global Maxwellian $\mu$ 
satisfies $Q(\cdot,\cdot)=0$, where
\Be\label{Maxwellian}
\mu(v):= \frac{1}{(2\pi)^{3/2}} \exp\bigg(
 - \frac{|v |^{2}}{2 }
 \bigg).
\Ee


The electric field $E$ is given by
\Be\label{Field}
E(t,x): =  - \nabla_{x} \phi(t,x),
\Ee
where an electrostatic potential is determined by the Poisson equation: 
\Be  \label{Poisson2}
-\Delta_x \phi(t,x) = \int_{\mathbb R^3 } (F_+(t,x,v) - F_-(t,x,v))  \, dv
\ \  \text{in} \ \O .
\Ee

A simplified one-species Vlasov-Poisson-Boltzmann equation is often considered to reduce the complexity. Where we let $F(t,x,v)$ takes the role of $F_+(t,x,v)$, and assume $F_- = \rho_0 \mu $ where the constant $\rho_0 = \int_{\O \times \mathbb R^3} F_+(0,x,v)  \,dv dx$. Then we get the system
\Be \label{Boltzmann_E}
\partial_{t} F + v\cdot \nabla_{x} F + E\cdot \nabla_{v} F = Q(F,F),
\Ee 
\Be  \label{Poisson}
-\Delta_x \phi(t,x) = \int_{\mathbb R^3 } F(t,x,v)  \, dv -\rho_0
\ \  \text{in} \ \O .
\Ee
Here the background charge density $\rho_0$ is assumed to be a constant.

Throughout this paper, we use the notation
\Be \label{iota} \begin{split}
 \iota = +  \text{ or } -, \text{ and }
 -\iota = \begin{cases}
- &, \text{ if } \iota = + \\ +&, \text{ if } \iota = -.
\end{cases}
\end{split} \Ee
And for the one-species case, $F_\iota = F$.


In many physical applications, e.g. semiconductor and tokamak, the charged dilute
gas is confined within a container, and its interaction with the boundary, which can be described by suitable boundary conditions, often plays a crucial role in global dynamics. In this paper we consider one of the physical conditions, a so-called diffuse boundary condition:
\Be\label{diffuse_BC}
F_\iota(t,x,v)= \sqrt{2\pi} \mu(v) \int_{n(x ) \cdot u>0} F_\iota(t,x,u) \{n(x) \cdot u\}\dd u \ \ \text{for} \ (x,v) \in \gamma_-.
\Ee
Here $\gamma_-: = \{(x,v) \in \p\O \times \R^3: n(x) \cdot v<0\}$, and $n(x)$ is the outward unit normal at a boundary point $x$. 

%

Due to its importance, there have been many research activities in mathematical study of the Boltzmann equation. In \cite{Guo_P}, global strong solution of Boltzmann equation coupled with the Poisson equation has been established through the nonlinear energy method, when the initial data are close to the Maxwellian $\mu$. In the large-amplitude regime, an almost exponential decay for Boltzmann solutions is established in \cite{DV}, provided certain a priori strong Sobolev estimates can be verified. Such high regularity insures an $L^\infty$-control of solutions which is crucial to handle the quadratic nonlinearity. Even though these estimates can be verified in periodic domains, their validity in general bounded domains have been doubted. 

Despite its importance, mathematical theory on boundary problems of VPB, especially for strong solutions, hasn't been developed up to satisfactory (cf. renormalized solutions of VPB were constructed in \cite{Michler}). One of the fundamental difficulties for the system in bounded domains is the lack of higher regularity, which originates from the characteristic nature of boundary conditions in the kinetic theory, and the nonlocal property of the collision term $Q$. This nonlocal term indicates that the local behavior of the solution could be affected globally by $x$ and $v$, and thus prevents the localization of the solution. From that a seemingly inevitable singularity of the spatial normal derivative at the boundary $x \in \p \O$ arises
$\p_n F_\iota(t,x,v) \sim \frac{1 }{ n(x) \cdot v  }  \notin L^1_{loc}.$
Such singularity towards the grazing set $\gamma_0 : = \{(x,v) \in \p\O \times \R^3: n(x) \cdot v=0\}$ has been studied thoroughly in \cite{GKTT1} for the Boltzmann equation in convex domain. Here we clarify that a $C^{\alpha}$ domain means that for any ${p} \in \partial{\Omega}$, there exists sufficiently small $\delta_{1}>0, \delta_{2}>0$, and an one-to-one and onto $C^{\alpha}$-map, $
	\eta_p: \{( x_{\|,1}, x_{\|,2} , x_n ) \in \mathbb R^3 : x_n > 0 \} \cap B(0; \delta_1 ) \to \Omega \cap B(p;\delta_2 )$ with $\eta_p ( x_{\|,1}, x_{\|,2} , x_n ) = \eta_p( x_{\|,1}, x_{\|,2} , 0 ) + x_n [-n (\eta_p( x_{\|,1}, x_{\|,2} , 0))],$
	such that $\eta_p (\cdot, \cdot, 0) \in \p\O$. 
	A \textit{convex} domain means that there exists $C_\O>0$ such that for all $p \in \p\O$ and  $\eta_p$ and for all $x_\parallel$,
\begin{equation}\label{convexity_eta}
\begin{split}
\sum_{i,j=1}^{2} \zeta_{i} \zeta_{j}\p_{i} \p_{j} \eta _{{p}}   ( x_{\parallel }  )\cdot  
n ( x_{\parallel } )
  \leq    - C_{\Omega} |\zeta|^{2}  \ 
  \text{ for all}   \ \zeta \in \mathbb{R}^{2}.
\end{split}
\end{equation}


Construction of a unique global solution and proving its asymptotic stability of VPB in general domains has been a challenging open problem for any boundary condition. In \cite{VPB} the authors give the \textit{first} construction of a unique global \textit{strong} solution of the one-species VPB system with the diffuse boundary condition when the domain is $C^3$ and \textit{convex.} Moreover an asymptotic stability of the global Maxwellian $\mu$ is studied. The result was then extended to the two-species case in \cite{2SVPB}.



%

\subsection{Global strong solution of VPB}
In \cite{VPB, {2SVPB}}, the authors take the first step toward comprehensive understanding of VPB in bounded domains. They consider the zero Neumann boundary condition for the potential $\phi$: $n \cdot E \vert_{\p \O }= \frac{ \p \phi }{\p n } \vert_{\p \O } = 0$, which corresponds to a so-called insulator boundary condition. In such setting $(F_\iota, E) = (\mu, 0)$ is a stationary solution. 

The characteristics (trajectory) is determined by the Hamilton ODEs for $f_+$ and $f_-$ separately
\Be\label{hamilton_ODE1}
\frac{d}{ds} \left[ \begin{matrix}X_\iota^f(s;t,x,v)\\ V_\iota^f(s;t,x,v)\end{matrix} \right] = \left[ \begin{matrix}V_\iota^f(s;t,x,v)\\ 
{-\iota} \nabla_x \phi_f
(s, X_\iota^f(s;t,x,v))\end{matrix} \right]  \ \ \text{for}   - \infty< s ,  t < \infty  ,
\Ee
with $(X_\iota^f(t;t,x,v), V_\iota^f(t;t,x,v)) =  (x,v)$. Where the potential is extended to negative time as $\phi_f(t,x)= e^{-|t|} \phi_{f_0}(x)$ for $t\leq 0$.


For $(t,x,v) \in \R  \times  \O \times \R^3$, define \textit{the backward exit time} $\tbpm^f(t,x,v)$ as   
\Be\label{tb}
\tbpm^f (t,x,v) := \sup \{s \geq 0 : X_\iota^f(\tau;t,x,v) \in \O \ \ \text{for all } \tau \in (t-s,t) \}.
\Ee
Furthermore, define $\xbpm^f (t,x,v) := X_\iota^f(t-\tbpm(t,x,v);t,x,v)$ and $\vbpm^f (t,x,v) := V_\iota^f(t-\tbpm(t,x,v);t,x,v)$.


In order to handle the boundary singularity, they introduce the following notion
\begin{definition}[Kinetic Weight] \label{kweight} For $\e>0$
\Be\label{alphaweight}\begin{split}
\alpha_{f,\e,\iota}(t,x,v) : =& \  
\chi \Big(\frac{t-\tbpm^{f}(t,x,v)+\e}{\e}\Big)
|n(\xbpm^{f}(t,x,v)) \cdot \vbpm^{f}(t,x,v)| \\
&+ \Big[1- \chi \Big(\frac{t-\tbpm^{f}(t,x,v) +\e}{\e}\Big)\Big].
\end{split}\Ee
Here they use a smooth function $\chi: \R \rightarrow [0,1]$ satisfying
\Be\label{chi}
\begin{split}
\chi(\tau)  =0,  \     \tau\leq 0, \ \text{and} \  \ 
\chi(\tau)  = 1    ,  \  \tau\geq 1. 
 \ \ \frac{d}{d\tau}\chi(\tau)  \in [0,4] \ \   \text{for all }   \tau \in \R.
\end{split}
\Ee
\end{definition}
Also, denote
\Be \label{matrixalpha}
\alpha_{f,\e}(t,x,v) := \begin{bmatrix} \alpha_{f, \e, +}(t,x,v) & 0 \\ 0 & \alpha_{f, \e, -}(t,x,v) \end{bmatrix}.
\Ee

Note that $\alpha_{f,\e,\iota}(0,x,v)\equiv \alpha_{{f_0},\e,\iota}(0,x,v)$ is determined by $f_0$. For the sake of simplicity,  the superscription $^f$ in $X_\iota^f, V_\iota^f, \tbpm^f, \xbpm^f, \vbpm^f$ is dropped unless they could cause any confusion.

One of the crucial properties of the kinetic weight in (\ref{alphaweight}) is an invariance under the Vlasov operator: $
\big[\p_t + v\cdot \nabla_x - \nabla_x \phi_f \cdot \nabla_v \big] \alpha_{f,\e,\iota}(t,x,v) =0.$ This is due to the fact that the characteristics solves a deterministic system (\ref{hamilton_ODE1}). This crucial invariant property under the Vlasov operator is one of the key points in their approach in \cite{VPB, 2SVPB}.

Denote $
\label{weight}
w_\vartheta(v) =  e^{\vartheta|v|^2}.$

\begin{theorem}[\cite{VPB, 2SVPB}] 
\label{main_existence}
Assume a bounded open $C^3$ domain $\O \subset\R^3$ is convex (\ref{convexity_eta}). Let $0< \tilde{\vartheta}< \vartheta\ll1$. Assume the compatibility condition: (\ref{diffuse_BC}) holds at $t=0$. 
There exists a small constant $0< \e_0 \ll 1$ such that for all $0< \e \leq \e_0$ if an initial datum $F_{0,\iota} =\mu + \sqrt \mu f_{0,\iota}$ satisfies
\Be\label{small_initial_stronger}
 \|w_\vartheta f_{0,\iota} \|_{L^\infty(\bar{\O} \times \R^3)}< \e,  \|   w_{\tilde{\vartheta}}   \nabla_{v } f_{0,\iota} \|_{ {L}^{3 } ( {\O} \times \R^3)}< \infty,
 \Ee
\Be\label{W1p_initial}
\begin{split}
 \| w_{\tilde{\vartheta}} \alpha_{f_{0,\iota}, \e }^\beta \nabla_{x,v } f_{0,\iota} \|_{ {L}^{p } ( {\O} \times \R^3)}
 <\e
\ \
\text{for}  \  \ 3< p < 6, \ \ 
1-\frac{2}{p }
 < \beta<
\frac{2}{3}
,\end{split}
\Ee
then there exists a unique global-in-time solution $F_\iota(t)= \mu+ \sqrt{\mu} f_\iota(t) \geq 0$ to (\ref{2FVPB}), (\ref{Field}), (\ref{Poisson2}), \eqref{diffuse_BC}. Moreover there exists $\lambda_{\infty} > 0$ such that 
\Be\begin{split}\label{main_Linfty}
 \sup_{ t \geq0}e^{\lambda_{\infty} t} \| w_\vartheta f_\iota(t)\|_{L^\infty(\bar{\O} \times \R^3)}+ 
 \sup_{ t \geq0}e^{\lambda_{\infty} t} \| \phi_f(t)  \|_{C^{2}(\O)}  \lesssim 1,
\end{split}\Ee
and, for some $C>0$, and, for $0< \delta= \delta(p,\beta) $,
\Be\label{W1p_main}
 \| w_{\tilde{\vartheta}} \alpha_{f, \e ,\iota }^{\beta } \nabla_{x,v} f_\iota(t)  \|_{L^{ p} ( {\O} \times \R^3)} 
 \lesssim e^{Ct} \ \ \text{for all } t \geq 0
,
\Ee
\Be\label{nabla_v f_31}
\| \nabla_v f_\iota (t) \|_{L^3_x (\O) L^{1+\delta }_v (\R^3)} \lesssim_t 1  \ \ \text{for all } \  t\geq 0.
\Ee

Furthermore, if $F_\iota$ nad $G_\iota$ are both solutions to (\ref{2FVPB}), (\ref{Field}), (\ref{Poisson2}), \eqref{diffuse_BC},
then 
\Be\label{stability_1+}
\| f_\iota(t) - g_\iota(t) \|_{L^{1+\delta} (\O \times \R^3)} \lesssim_t \| f_\iota(0) - g_\iota(0) \|_{L^{1+\delta} (\O \times \R^3)} \ \ \text{for all } \  t\geq 0.
\Ee 

\end{theorem}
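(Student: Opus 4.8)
The plan is to construct $F_\iota = \mu + \sqrt{\mu} f_\iota$ by a nonlinear iteration built on the mild (Duhamel) formulation of $f_\iota$ along the characteristics \eqref{hamilton_ODE1}, propagating three coupled layers of a priori estimates: (i) an $L^\infty$ bound with exponential decay for $w_\vartheta f_\iota$ and for $\phi_f$ in $C^2$; (ii) the weighted $W^{1,p}$ bound \eqref{W1p_main} through the kinetic weight $\alpha_{f,\e,\iota}$; and (iii) the auxiliary bound \eqref{nabla_v f_31} on $\nabla_v f_\iota$. Writing $L$ for the standard linearized collision operator and $\Gamma$ for the quadratic remainder, $f_\iota$ solves a transport--Boltzmann equation $\partial_t f_\iota + v\cdot\nabla_x f_\iota - \iota\nabla_x\phi_f\cdot\nabla_v f_\iota + L f_\iota = \Gamma(f,f) + (\text{field}\times\text{data})$ with the diffuse data \eqref{diffuse_BC}, which by hypothesis is compatible at $t=0$. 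For the $L^\infty$ layer I would run the $L^2$--$L^\infty$ bootstrap: first the coercivity of $L$ together with the conservation laws and a Poincar\'e inequality --- here the zero-Neumann condition $n\cdot E\vert_{\partial\Omega}=0$ is what makes $(\mu,0)$ stationary and lets one control the hydrodynamic part, the Poisson term contributing a nonnegative electric energy --- to obtain exponential $L^2$ decay, then unfold the diffuse reflection into stochastic cycles and use that trajectories spend only a small fraction of time near $\gamma_0$ to upgrade to exponential $L^\infty$ decay. The $C^2$ bound on $\phi_f$ then follows from elliptic regularity applied to \eqref{Poisson2}: setting $\rho_{f,\iota} = \int \sqrt\mu f_\iota \,dv$, one needs $\rho_f \in C^{0,\alpha}$, i.e. $\nabla_x \rho_f \in L^q_x$ for some $q>3$, and since $\nabla_x \rho_{f,\iota} = \int \nabla_x f_\iota \,\sqrt\mu\,dv$ is bounded pointwise in $x$ by $\big\|\alpha_{f,\e,\iota}^{\beta}\nabla_x f_\iota\big\|_{L^p_v}\big\|\alpha_{f,\e,\iota}^{-\beta}\sqrt\mu\big\|_{L^{p'}_v}$, this closes (with $q=p>3$) precisely because $\beta<2/3$ guarantees $\alpha_{f,\e,\iota}^{-\beta}\sqrt\mu$ is $v$-integrable to the needed power near $\gamma_0$.

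The heart of the matter --- and the step I expect to be the main obstacle --- is the weighted $W^{1,p}$ estimate \eqref{W1p_main}. Differentiating the equation in $(x,v)$ yields a transport equation for $\alpha_{f,\e,\iota}^{\beta}\nabla_{x,v} f_\iota$, and the crucial structural fact, noted in the excerpt, is that $\alpha_{f,\e,\iota}$ is invariant under the Vlasov operator $\partial_t + v\cdot\nabla_x - \iota\nabla_x\phi_f\cdot\nabla_v$, so this operator essentially commutes with multiplication by $\alpha^{\beta}$. What then has to be controlled are: (a) the commutator and field-coupling terms involving $\nabla_x^2\phi_f$ and $\nabla_x\phi_f\cdot\nabla_v(\nabla f)$, handled by the $C^2$ bound on $\phi_f$ and interpolation; (b) the collision term $\nabla\Gamma(f,f)$, where the $L^\infty$-smallness of $f$ absorbs one factor; and, most delicately, (c) the boundary contribution from differentiating \eqref{diffuse_BC}, which produces $\nabla_{x,v}$ of the exit-time quantities $\tbpm,\xbpm,\vbpm$ --- objects singular like $1/(n(\xbpm)\cdot\vbpm)$ toward $\gamma_0$. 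Convexity \eqref{convexity_eta} of $\Omega$ is indispensable precisely here: it forces the characteristics to meet $\partial\Omega$ transversally with a quantitative lower bound on $|n(\xbpm)\cdot\vbpm|$ outside the $\chi$-cutoff region, so that the factor $\alpha^{\beta}$ exactly cancels the worst singularity provided $1-\tfrac2p<\beta$, while $\beta<\tfrac23$ and $3<p<6$ keep the remaining velocity averages and the trace integrals over $\gamma_-$ finite and let a Gr\"onwall inequality close with the stated $e^{Ct}$ growth.

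For the auxiliary bound \eqref{nabla_v f_31} I would differentiate only in $v$: since the diffuse data on $\gamma_-$ does not depend on the incoming velocity, no boundary term on $\gamma_-$ appears, and $\nabla_v f_\iota$ solves a transport equation whose source is controlled by $\nabla_x f_\iota$ (from differentiating $v\cdot\nabla_x$) and by $\nabla\Gamma$; balancing $L^3_x$ against $L^{1+\delta}_v$ and invoking \eqref{W1p_main} together with $w_{\tilde\vartheta}\nabla_v f_0\in L^3$ propagates $\|\nabla_v f_\iota(t)\|_{L^3_x L^{1+\delta}_v}\lesssim_t 1$. With all a priori estimates available, I would run the iteration $F^{(n)}\mapsto F^{(n+1)}$ and show it is Cauchy in the $L^\infty$ and weighted $W^{1,p}$ norms; this requires continuous dependence of the characteristics, the exit times $\tbpm$, and the weight $\alpha_{f,\e,\iota}$ on the field $\phi_f$ --- which again uses the transversality from convexity to differentiate $\tbpm$ in the field --- and the smallness $\e$ makes the maps contractive. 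Positivity $F_\iota\ge 0$ is read off the mild formulation, since $Q_\mathrm{gain}\ge 0$ while $Q_\mathrm{loss}$ contributes only a nonnegative exponential damping factor, and it is stable along the iteration.

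Finally, for uniqueness and the stability estimate \eqref{stability_1+}, I would subtract the mild formulations of two solutions $F_\iota$ and $G_\iota$. The difference of the free-transport parts is estimated in $L^{1+\delta}$ by controlling the difference of trajectories generated by $\phi_f-\phi_g$ via Gr\"onwall on the ODE \eqref{hamilton_ODE1}, paying one derivative that is supplied by the $L^p$ ($p>3$) bound \eqref{W1p_main}; the difference of the collision terms is absorbed using the $L^\infty$-smallness, and the difference of the boundary terms via trace and change-of-variables estimates on $\gamma_-$. This yields $\|f_\iota(t)-g_\iota(t)\|_{L^{1+\delta}}\lesssim_t\|f_\iota(0)-g_\iota(0)\|_{L^{1+\delta}}$, which in particular forces uniqueness. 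The sub-$L^2$ exponent $1+\delta$ is unavoidable because the transport-difference contribution is only controlled after paying a derivative that lives in $L^p$ with $p>3$, so the H\"older pairing must be closed below $L^2$.
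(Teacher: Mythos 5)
Your outline follows the same skeleton as the paper's argument (the $L^2$--$L^\infty$ bootstrap along characteristics, the $\alpha$-weighted $W^{1,p}$ energy estimate feeding a Morrey/Schauder bound for $\phi$, the $\nabla_v f\in L^3_xL^{1+\delta}_v$ bound exploiting that the diffuse datum does not see the incoming velocity, and an $L^{1+\delta}$ stability estimate), but it misses the one device that makes the scheme close \emph{globally in time}. Your route to the $C^2$ bound of $\phi_f$ -- elliptic regularity plus $\nabla_x\rho_f\in L^p_x$ -- only yields $\|\phi_f(t)\|_{C^{2,0+}}\lesssim \|w_{\tilde\vartheta}\alpha_{f,\e,\iota}^\beta\nabla_x f_\iota(t)\|_{L^p}$, which by \eqref{W1p_main} grows like $e^{Ct}$. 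But the $L^2$--$L^\infty$ bootstrap hinges on the change of variables $v\mapsto X_\iota(s;t,x,v)$ with $\partial X_\iota/\partial v\sim O(|t-s|)\mathrm{Id}$, which requires $\|\nabla_x^2\phi_f(t)\|_{L^\infty}$ uniformly bounded (in fact decaying, as asserted in \eqref{main_Linfty}). As written, your layers are circular: the $L^\infty$ decay needs a decaying $C^2$ bound, while your $C^2$ bound inherits the exponential growth of the $W^{1,p}$ norm. The paper breaks this circle with the interpolation of Lemma \ref{lemma_interpolation}: $\|\nabla^2_x\phi(t)\|_{L^\infty}\lesssim e^{D_1\Lambda_0 t}\|\phi(t)\|_{C^{1,1-D_1}}+e^{-D_2\Lambda_0 t}\|\phi(t)\|_{C^{2,D_2}}$, where the $C^{1,1-}$ piece decays exponentially because it is controlled by $\|w_\vartheta f_\iota(t)\|_{L^\infty}$, and the growing $C^{2,0+}$ piece is damped by the prefactor $e^{-D_2\Lambda_0 t}$. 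Without this (or some substitute) your argument can produce at best a local-in-time solution, not the global existence and decay claimed in the theorem.

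A second gap is that you treat the $v$-integrability of $\alpha_{f,\e,\iota}^{-\beta}$ as automatic (``$\beta<2/3$ guarantees $\alpha^{-\beta}\sqrt\mu$ is $v$-integrable''), and you locate the role of convexity in a ``quantitative lower bound on $|n(\xb)\cdot\vb|$''. Neither is right: grazing exit velocities occur in convex domains too, so $\alpha_{f,\e,\iota}$ genuinely degenerates, and more importantly $\alpha_{f,\e,\iota}(t,x,v)$ admits no local formula in $v$ -- it is defined through the backward exit point along the characteristics of the self-consistent field -- so the bound \eqref{int1overalpha} is itself a central lemma rather than a calculus fact. In the paper it is proved via the geometric change of variables $v\mapsto(\xb,\tb)$, whose Jacobian $dv\sim \alpha_{f,\e,\iota}|\tb|^{-3}\,d\tb\,d\xb$ supplies the extra factor of $\alpha$ that cancels the singularity, combined with $\tb\gtrsim |x-\xb|/\max|V|$ and $\alpha\lesssim |(x-\xb)\cdot n(\xb)|/\tb$, leading to \eqref{alpha_bounded_intro}; convexity enters through these velocity-lemma-type comparisons along trajectories (and in handling the boundary traces in the $W^{1,p}$ estimate), not through transversality. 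Since both your $C^{0,\alpha}$ bound on $\rho_f$ and your treatment of the boundary terms in step (c) rest on this unproved estimate, you should either reproduce the change-of-variables argument or cite it explicitly as the key input. (A smaller inaccuracy: the exponent $1+\delta$ in \eqref{stability_1+} is dictated by $\nabla_v f\in L^3_xL^{1+\delta}_v$, which in turn comes from pairing the singular forcing $\nabla_x f$ with $\alpha^{-\beta}\in L^{p/(p-1)}_v$, not directly by ``paying a derivative in $L^p$, $p>3$''.)
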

\begin{remark}The second author and his collaborators constructs a local-in-time solution for given general large datum in \cite{CKL} for the generalized diffuse reflection boundary condition. By introducing a scattering kernel $R(u \rightarrow v;x,t)$, representing the probability of a molecule striking in the boundary at $x\in\partial\Omega$ with velocity $u$ to be bounced back to the domain with velocity $v$, they consider  \begin{equation}\begin{split}\label{eqn:BC}
&F(t,x,v) |n(x) \cdot v|= \int_{\gamma_+(x)}
R(u \rightarrow v;x,t) F(t,x,u)
\{n(x) \cdot u\} d u, \quad \text{ on }\gamma_-
.
\end{split}
\end{equation}
 In \cite{CKL} they study a model proposed by Cercignani and Lampis in~\cite{CIP,CL}. With two accommodation coefficients $  0<r_\perp\leq 1,\quad 0<r_\parallel<2 ,$
%
the Cercignani-Lampis boundary condition (C-L boundary condition) can be written as
\begin{equation}\label{eqn: Formula for R}\begin{split}
 &R(u \rightarrow v;x,t)\\
:=&  \frac{1}{r_\perp r_\parallel (2- r_\parallel)\pi/2} \frac{|n(x) \cdot v|}{(2T_w(x))^2}
I_0 \left(
 \frac{1}{2T_w(x)}\frac{2 (1-r_\perp)^{1/2} v_\perp u_\perp}{r_\perp}
\right)
\\
& \times 
\exp\left(- \frac{1}{2T_w(x)}\left[
\frac{|v_\perp|^2 + (1- r_\perp) |u_\perp|^2}{r_\perp}
+ \frac{|v_\parallel - (1- r_\parallel ) u_\parallel|^2}{r_\parallel (2- r_\parallel)}
\right]\right).
\end{split}
\end{equation}
Here $T_w(x)$ is a wall temperature on the boundary and $I_0 (y) := \pi^{-1} \int^{\pi}_0e^{y \cos \phi } d \phi$.
In this formula, $v_\perp$ and $v_\parallel$ denote the normal and tangential components of the velocity respectively: $   v_\perp= v\cdot n(x) ,  v_\parallel = v- v_\perp n(x)\,$.
\end{remark}

In \cite{VPB, 2SVPB} a global $L^\infty$-bound is proven by $L^2-L^\infty$ framework. The idea is to use Duhamel's principle to estimate the solution $f$ along the characteristics \eqref{hamilton_ODE1} to reach
\Be \begin{split}\notag
   \| f_\iota(t) \|_{L^\infty(\bar \O \times \mathbb R^3)}
 \sim  \| e^{-  t} f_{0,\iota} \|_{L^\infty(\bar \O \times \mathbb R^3)} + \int_0^t e^{- (t-s ) } \| f_\iota(s) \|_{L^2(\O \times \mathbb R^3) } ds.
\end{split} \Ee
And then use the decay of $f_\iota$ in $L^2$ norm to conclude the decay in $L^\infty$.
The key of this process is to verify
\Be\begin{split}
\frac{ \p X_\iota(s;t,x,v) }{ \p v } &\sim -(t-s)\text{Id}_{3 \times 3 } + \int_s^t \int_{s'}^t \nabla_x^2 \phi(s'') \frac{ \p X_\iota (s'';t,x,v) }{\p v } ds' ds'' \\
&\sim O(|t-s|) \text{Id}_{3\times3}.
\end{split}\Ee
For which the $C^2$-bound of $\phi$ seems necessary. Unfortunately such $C^2$ estimate for $\phi$ falls short of the boarder line case of the Schauder elliptic regularity theory when the source term of the Poisson equation $\int_{\mathbb R^3 } (F_+ - F_- )  dv $ in \eqref{Poisson2} is merely continuous or bounded. They overcome such difficulty by interpolating the $C^2$ norm into a sum of a $C^{2,0+}$ norm and a $C^{1,1-}$ norm:
\begin{lemma}\label{lemma_interpolation}Assume $\O \subset \R^3$ with a $C^2$ boundary $\p\O$. For $0< D_1<1$, $0< D_2<1$, and $\Lambda_0>0$,
		\Be\begin{split}\label{phi_interpolation}
			\|\nabla^2_x \phi(t )\|_{L^\infty (\O)}
			\lesssim_{\O, D_1, D_2}
			e^{D_1 \Lambda_0t}\|  \phi(t)\|_{C^{1,1-D_1}(\O)}
			+ e^{- D_2 \Lambda_0t}\|  \phi(t)\|_{C^{2, D_2}(\O)}.
		\end{split}\Ee
	\end{lemma}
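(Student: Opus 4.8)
The plan is to deduce \eqref{phi_interpolation} from a single scale-dependent interpolation inequality: for every $g$ with $\|g\|_{C^{2,D_2}(\O)}<\infty$ (if this norm is infinite there is nothing to prove) and every $0<r\le r_0$, with $r_0>0$ depending only on $\O$,
\[
\|\nabla_x^2 g\|_{L^\infty(\O)} \;\lesssim_{\O}\; r^{-D_1}\,[\nabla_x g]_{C^{1-D_1}(\O)} \;+\; r^{D_2}\,[\nabla_x^2 g]_{C^{D_2}(\O)},
\]
where $[\,\cdot\,]_{C^\gamma}$ is the usual H\"older seminorm. Granting this, I would apply it to $g=\phi(t)$ with the choice of scale $r=r_0\,e^{-\Lambda_0 t}$: since $r_0,\Lambda_0>0$ and $t\ge 0$ this scale is always $\le r_0$ (so no split into small/large $t$ is needed), and it produces exactly $r^{-D_1}=r_0^{-D_1}e^{D_1\Lambda_0 t}$ and $r^{D_2}=r_0^{D_2}e^{-D_2\Lambda_0 t}$. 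Bounding the two H\"older seminorms by $\|\phi(t)\|_{C^{1,1-D_1}(\O)}$ and $\|\phi(t)\|_{C^{2,D_2}(\O)}$ respectively then yields \eqref{phi_interpolation} with implied constant depending on $\O$, $D_1$, $D_2$ (through $r_0$ and the exponents).

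To prove the scale-dependent inequality, fix $x_0\in\bar\O$; since $\nabla_x^2 g\in C^0(\bar\O)$ it is enough to estimate the operator norm $|\nabla_x^2 g(x_0)|$. The bounded $C^2$ domain $\O$ satisfies a uniform interior cone condition, so there are $r_0>0$ and $\theta_0\in(0,\tfrac{\pi}{2}]$, depending only on $\O$, and for each $x_0$ a unit vector $\nu$ such that $x_0+s e\in\bar\O$ whenever $0\le s\le r_0$ and $\angle(e,\nu)\le\theta_0$. For such an admissible direction $e$ and $0<r\le r_0$, Taylor's formula with integral remainder gives $\nabla_x g(x_0+re)-\nabla_x g(x_0)= r\,\nabla_x^2 g(x_0)e + r\int_0^1[\nabla_x^2 g(x_0+sre)-\nabla_x^2 g(x_0)]e\,\mathrm{d}s$, whence, writing $\alpha=1-D_1$ and $\beta=D_2$,
\[
|\nabla_x^2 g(x_0)\,e| \;\le\; r^{-1}[\nabla_x g]_{C^{\alpha}(\O)}\,r^{\alpha} + [\nabla_x^2 g]_{C^{\beta}(\O)}\!\int_0^1 (sr)^\beta\,\mathrm{d}s \;\le\; [\nabla_x g]_{C^{\alpha}(\O)}\,r^{-D_1} + [\nabla_x^2 g]_{C^{\beta}(\O)}\,r^{D_2}.
\]

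The remaining step --- and the only delicate one --- is to recover the full operator norm $|\nabla_x^2 g(x_0)|$ from this bound, which near $\p\O$ only controls directional second derivatives in a proper subcone of directions. Here I would use that $\nabla_x^2 g(x_0)$ is symmetric: if $e^\star$ is a unit eigenvector for its eigenvalue of largest modulus, then projecting $\pm e^\star$ onto the admissible cone produces an admissible $e$ with $|e\cdot e^\star|\ge\sin\theta_0$, so that $\max_{e\ \mathrm{admissible}}|\nabla_x^2 g(x_0)e|\ge(\sin\theta_0)\,|\nabla_x^2 g(x_0)|$. Combining this with the displayed estimate and taking the supremum over $x_0\in\O$ gives the scale-dependent inequality with $\O$-constant $(\sin\theta_0)^{-1}$. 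Everything else is the standard convexity estimate placing $C^2$ between $C^{1,\alpha}$ and $C^{2,\beta}$; the geometric care needed at the boundary, where only a cone of inward directions is available, is exactly what the $C^2$ regularity of $\p\O$ buys us.
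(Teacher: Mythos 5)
Your proof is correct. A remark on the comparison: this survey only states Lemma \ref{lemma_interpolation} and defers the proof to \cite{VPB,2SVPB}, where the estimate is obtained by the standard H\"older-scale interpolation, i.e.\ a bound of the form $\|\nabla^2_x\phi\|_{L^\infty(\O)}\lesssim \|\phi\|_{C^{1,1-D_1}(\O)}^{\theta}\,\|\phi\|_{C^{2,D_2}(\O)}^{1-\theta}$ with $\theta=\frac{D_2}{D_1+D_2}$, followed by Young's inequality with the weight $e^{\Lambda_0 t}$, which produces exactly the two exponential factors $e^{D_1\Lambda_0 t}$ and $e^{-D_2\Lambda_0 t}$. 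Your argument is the same mechanism in additive form: the scale-dependent inequality $\|\nabla^2_x g\|_{L^\infty}\lesssim r^{-D_1}[\nabla_x g]_{C^{1-D_1}}+r^{D_2}[\nabla^2_x g]_{C^{D_2}}$ with the choice $r=r_0e^{-\Lambda_0 t}$ (valid since $t\ge0$, which is the regime in which the lemma is used) is precisely what the multiplicative interpolation plus Young encodes, so nothing is lost. What your route buys is a self-contained elementary proof that makes explicit the only genuinely delicate point, namely recovering the full Hessian near $\p\O$ when only directions in a uniform interior cone are available: your use of the symmetry of $\nabla^2_x g(x_0)$, via $|\nabla^2_x g(x_0)e|\ge|e\cdot\nabla^2_x g(x_0)e^\star|=|\lambda|\,|e\cdot e^\star|\ge\sin\theta_0\,|\nabla^2_x g(x_0)|$ after replacing $e^\star$ by $\pm e^\star$ and tilting it into the cone, is exactly right, and the Taylor step is legitimate because the cone condition keeps the whole segment $[x_0,x_0+re]$ in $\bar\O$ while finiteness of $\|\phi(t)\|_{C^{2,D_2}(\O)}$ gives $\nabla^2_x\phi(t)\in C^0(\bar\O)$. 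The dependence of the implied constant on $\O,D_1,D_2$ through $\theta_0$ and $r_0^{\pm}$ matches the statement.
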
 
 While an exponential decay of the weaker $C^{1,1-}$ norm can be derived from the exponential decay of $f_\iota$ in $L^\infty$, the $C^{2,0+}$ norm is controlled by Morrey's inequality
\Be \label{phic2pbd}
\| \phi \|_{C_x^{2,0 +}} \lesssim \sum_{\iota = \pm} \| \int_{\mathbb R^3 } f\iota \sqrt \mu dv \|_{C_x^{0,0+} } \lesssim  \sum_{\iota = \pm} \| \int_{\mathbb R^3} \nabla_x f_\iota \sqrt \mu dv \|_{L^p_x} , \text{ for } p>3.
\Ee

Now the spatial derivative of $f_\iota$ needs to be controlled.
%
They develop an $ \alpha_\iota$-weighted $W^{1,p}$ estimate by energy-type estimate of $ \alpha_\iota \nabla_{x,v} f_\iota$, where the $ \alpha_\iota$-multiplication compensates the boundary singularity. This allows us to bound \eqref{phic2pbd} for $\frac{p-2}{p} < \beta < \frac{p-1}{p}$,
\[
 \| \int_{\mathbb R^3} \nabla_x f_\iota \sqrt \mu dv \|_{L^p_x} \lesssim \|  \alpha_\iota^{-\beta} \|_{L^{\frac{p}{p-1}}} \|  \alpha_\iota^\beta \nabla_x f_\iota \sqrt \mu \|_{L^p_{x,v} } \lesssim \|  \alpha_\iota^\beta \nabla_x f_\iota \sqrt \mu \|_{L^p_{x,v} },
\]
as long as
\Be \label{int1overalpha}
 \alpha_\iota^{-\frac{\beta p}{p-1} } \sim \frac{1}{ \alpha_\iota(t,x,v)^{1-}} \in L^1_v \text{ uniformly for all } x.
\Ee

A difficulty of the proof of \eqref{int1overalpha} arises form lack of local representation of $ \alpha_\iota (t, x, v)$. $\alpha_\iota$ is only defined at some boundary point along (possibly very complicated) characteristics. They employ a geometric change of variables $v \mapsto (\xbpm(t,x,v),\tbpm(t,x,v) )$ to exam \eqref{int1overalpha}. By computing the Jacobian there is an extra $ \alpha$-factor from $dv \sim \frac{  \alpha_\iota}{|\tbpm|^3 } d\tbpm d \xbpm$, which cancels the singularity of \eqref{int1overalpha}. Then they use a lower bound of $\tbpm \gtrsim \frac{|\xbpm^f-x|}{\max |V|}$ and a bound $ \alpha \lesssim \frac{|(x-\xbpm^f) \cdot n(\xbpm^f)|}{\tbpm^f}$ to have
\Be\label{alpha_bounded_intro}
\int_{|v| \lesssim 1} { \alpha_\iota}^{-  \frac{\beta p}{p-1}} \dd v \lesssim \int_{\text{boundary}} \frac{|(x- \xbpm) \cdot n(\xbpm)|^{1-  \frac{\beta p}{p-1}}}{|x-\xbpm|^{3-  \frac{\beta p}{p-1}}} \dd \xbpm
+ \text{good terms}< \infty, 
\Ee
which turns to be bounded as long as $ \frac{\beta p}{p-1}<1$.


From the above estimates and the interpolation, they derive \textit{an exponential decay} of $\phi(t)$ in $C_x^2$ as long as $\|  \alpha_\iota^\beta \nabla_x f(t) \|_{L_{x,v}^p} $ grows at most exponentially. With the $C_x^2$-bound of $\phi$ in hand, they control $\|  \alpha_\iota^\beta \nabla_x f(t) \|_{L_{x,v}^p} $via Gronwall's inequality and close the estimate by proving its (at most) exponential growth.

For the uniqueness and stability of approximating sequence they prove $L^1$-stability. The key observation is that $v$-derivatives of the diffuse BC \eqref{diffuse_BC} has no boundary singularity, thus is bounded. The equation of $\nabla_v f_\iota$ has a singular forcing term $\nabla_x f_\iota$. 
For which they control $\| \nabla_x f_\iota \|_{L_x^3 L_v^1 } $ as $\|  \alpha_\iota^{-\beta} \|_{L_v^{\frac{p}{p-1}} } \|  \alpha_\iota^\beta \nabla_x f_\iota\|_{L^p_{x,v} } $, and this term is bounded from \eqref{int1overalpha}.

\hide
For the two-species VPB system in \cite{2SVPB}, when performing the expansion for $\begin{bmatrix} F_+ \\ F_- \end{bmatrix} = \begin{bmatrix} \mu + \sqrt \mu f_+ \\ \mu + \sqrt \mu f_- \end{bmatrix}$, the vector linearized Boltzmann operator $L$ is defined as
\Be \label{L_decomposition}
L \begin{bmatrix} g_1 \\ g_2 \end{bmatrix} :=  -\frac{1}{\sqrt \mu } \begin{bmatrix}  2 Q(\sqrt \mu g_1, \mu ) + Q (\mu, \sqrt \mu ( g_1 + g_2 ) )
\\   2 Q(\sqrt \mu g_2, \mu ) + Q (\mu, \sqrt \mu ( g_1 + g_2 ) )
 \end{bmatrix}.
 \Ee
The null space of $L$ is a six-dimensional subspace of $L^2_v(\mathbb R^3; \mathbb R^2 )$ spanned by orthonormal vectors
\Be 
\left\{ \begin{bmatrix} \sqrt \mu \\ 0  \end{bmatrix}, \begin{bmatrix} 0  \\ \sqrt \mu \end{bmatrix}, \begin{bmatrix} \frac{v_i}{\sqrt 2 } \sqrt \mu \\ \frac{v_i}{\sqrt 2 } \sqrt \mu   \end{bmatrix}, \begin{bmatrix} \frac{|v|^2 - 3}{2\sqrt 2} \sqrt \mu \\ \frac{|v|^2 - 3}{2\sqrt 2} \sqrt \mu   \end{bmatrix}
 \right\}, \, i = 1,2,3.
\Ee
And the projection of $\mathbf f = \begin{bmatrix} f_+ \\ f_- \end{bmatrix}$ onto the null space $N(L)$ can be denoted by
\Be \begin{split}
& \mathbf P \mathbf f (t,x,v)
\\ & := \left\{ a_+(t,x) \begin{bmatrix} \sqrt \mu \\ 0  \end{bmatrix} + a_-(t,x) \begin{bmatrix} 0  \\ \sqrt \mu \end{bmatrix} + b(t,x)  \cdot \frac{v}{\sqrt 2 } \begin{bmatrix} \sqrt \mu \\ \sqrt \mu  \end{bmatrix} + c(t,x)  \frac{|v|^2 - 3}{2\sqrt 2}\begin{bmatrix} \sqrt \mu \\ \sqrt \mu  \end{bmatrix}
\right\}.
\end{split} \Ee
Using the standard $L^2$ energy estimate of the equation, it is well-known that $L$ is degenerate: $\left \langle L \mathbf f , \mathbf f \right \rangle  \gtrsim \left \|  (I - \mathbf P) \mathbf f \right \|_{L^2_{x,v }}$. Thus it's clear that in order to control the $L^2$ norm of $f_\iota(t)$, a way to bound the missing $ \left \| \mathbf P\mathbf f (t) \right \|_{L^2} $ term is needed. Fortunately, the technique of test function method developed in \cite{EGKM} can be applied to the two-species settings. By the weak formulation of the equation and a set of properly choosing test functions, the $a_\pm(t,x), b(t,x), c(t,x)$ can be controlled. And the estimate of $\mathbf P\mathbf f $  is achieved as
\[
\left \| \mathbf P\mathbf f (s) \right \|_{L_{x,v}^2 } \lesssim \left \| ( I - \mathbf P ) \mathbf f \right \|_{L_{x,v}^2 } + \text{ "good terms"}.
\]
\unhide

%

\subsection{Improved regularity under the sign condition}
One interesting question is to improve the regularity estimate beyond a weighted $W^{1,p}$ for $p< 6$ of $f_\iota$ in \cite{VPB, 2SVPB}. Some work in this direction has been done in \cite{VPBEP}.

In \cite{VPBEP} the author consider the one-species VPB system \eqref{Field}, \eqref{Boltzmann_E}, where the potential consists of a self-generated electrostatic potential and an external potential. That is $E = \nabla \phi$, where
\begin{equation} \label{VPB2}
\phi(t,x) = \phi_F(t,x) + \phi_E(t,x), \text{ with } \frac{\p \phi_E}{\p n } > C_E > 0 \text{ on } \p \Omega,
\end{equation}
and $\phi_F$ satisfies \eqref{Poisson} and the zero Neumann boundary condition $\frac{ \p \phi_F}{\p n } = 0$ on $\p \O$. Under such setting, the field $E$ satifies a crucial sign condition on the boundary
\Be \label{signEonbdry}
E(t,x) \cdot n(x) > C_E > 0 \text{ for all } t \text{ and all } x \in \p \O.
\Ee

With the help of the external potential $\phi_E$ with the crucial sign condition \eqref{VPB2}, they construct a short time weighted $W^{1,\infty}$ solution to the VPB system, which improves the regularity estimate of such system in Theorem \ref{main_existence}. The key idea of the result is to incorporate a different distance function $\tilde \alpha$:
\Be \label{alphatilde}
\tilde \alpha \sim \bigg[ |v \cdot \nabla \xi (x)| ^2 + \xi (x)^2 - 2 (v \cdot \nabla^2 \xi(x) \cdot v ) \xi(x) - 2(E(t,\overline x ) \cdot \nabla \xi (\overline x ) )\xi(x) \bigg]^{1/2},
\Ee
where $\xi:\mathbb R^3 \to \mathbb R$ is a smooth function such that $ \Omega = \{ x \in \mathbb R^3: \xi(x) < 0 \}$, and the closest boundary point $\overline x := \{ \bar x \in \p \Omega :  d(x,\bar x ) = d(x, \partial \Omega) \}$ is uniquely defined for $x$ closed to the boundary. Note that $\tilde \alpha \vert_{\gamma_-} \sim | n(x) \cdot v |$. A version of a distance function without the potential was used in \cite{GKTT1}. One of the key contribution in \cite{VPBEP} is to incorporate this different distance function \eqref{alphatilde} in the presence of an external field.

\begin{theorem}[\cite{VPBEP}]\label{WlinftyVPBthm} Let $\phi_E (t,x)$ be a given external potential with $\nabla_x \phi_E$ satisfying (\ref{signEonbdry}), and 
$ \| \nabla_x \phi_E(t,x) \|_{C^1_{t,x}(\R_+ \times\bar \O )} 
 < \infty.$
 Assume that, for some $0< \vartheta < \frac{1}{4}$,
$\|  w_\vartheta \tilde{\alpha} \nabla_{x,v} f_0 \|_{L^\infty(\bar \O \times \mathbb R^3)} + \| w_\vartheta f_0 \|_{L^\infty(\bar \O \times \mathbb R^3)} 
< \infty.$
Then there exists a unique solution $F(t,x,v) = \sqrt \mu f(t,x,v) $ to (\ref{Boltzmann_E}), \eqref{Field},  (\ref{diffuse_BC}), (\ref{VPB2}) for $t \in [0,T]$ with $0 < T \ll 1$, such that for some $0< \vartheta' < \vartheta $, $\varpi \gg 1$,
$\sup_{0\le t \le T} \| w_{\vartheta'} f(t) \|_{L^\infty(\bar \O \times \mathbb R^3)} < \infty,$ and
\begin{equation}
\sup_{0 \le t \le T}  \| w_{\vartheta'} e^{-\varpi  \langle v \rangle t } \tilde{\alpha}  \nabla_{x,v}  f^{} (t,x,v) \|_{L^\infty(\bar \O \times \mathbb R^3)} < \infty. 
\end{equation}
\end{theorem}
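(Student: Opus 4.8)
\emph{Proof sketch.} The plan is to adapt the $L^\infty$ / weighted-$W^{1,\infty}$ scheme of \cite{VPB,GKTT1} to a short time interval $[0,T]$ with $T\ll1$, where no decay mechanism is needed and all Gronwall constants on $[0,T]$ are harmless. Writing $F=\sqrt\mu f$, the $f$-equation is $[\partial_t+v\cdot\nabla_x+E\cdot\nabla_v]f-\frac12(E\cdot v)f=\mu^{-1/2}Q(\sqrt\mu f,\sqrt\mu f)$, the last zeroth-order term being harmless once $\|E\|_{L^\infty}$ is known. One runs a Picard iteration $F^\ell\mapsto F^{\ell+1}$: given $f^\ell$, set $E^\ell=\nabla_x(\phi_{F^\ell}+\phi_E)$ with $\phi_{F^\ell}$ the Neumann solution of \eqref{Poisson} at $F^\ell$, and let $f^{\ell+1}$ solve the linear transport problem with field $E^\ell$, collision coefficients frozen at $F^\ell$, and the diffuse reflection \eqref{diffuse_BC}. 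Existence of each linear step and the unweighted bound $\sup_{[0,T]}\|w_{\vartheta'}f^{\ell+1}(t)\|_{L^\infty}\lesssim\|w_\vartheta f_0\|_{L^\infty}+1$, uniformly in $\ell$, follow from Duhamel's formula along the characteristics \eqref{hamilton_ODE1}: the diffuse kernel is $\lesssim\sqrt\mu$, the bouncing is controlled for $T$ small, and the quadratic collision terms carry a factor $T$.

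The elliptic step supplies the regularity of $E^\ell$ needed by the characteristic estimates. From the weighted gradient bound of the previous step, $|\nabla_x f^\ell(t,x,v)|\lesssim e^{\varpi\langle v\rangle t}\,\tilde\alpha(t,x,v)^{-1}$, hence $|\nabla_x\rho^\ell(t,x)|\lesssim\int_{\R^3}\mu(v)^{1/4}\,\tilde\alpha(t,x,v)^{-1}\,dv$ for $t$ small. The purpose of \eqref{alphatilde} is the lower bound $\tilde\alpha(t,x,v)^2\gtrsim|v\cdot\nabla\xi(x)|^2+d(x,\partial\Omega)$ near $\partial\Omega$ for $|v|\lesssim1$: since the Neumann condition forces $E\cdot n=\partial_n\phi_E>C_E>0$ on $\partial\Omega$, the term $-2(E(t,\overline{x})\cdot\nabla\xi(\overline{x}))\xi(x)=2(E\cdot\nabla\xi)\,|\nabla\xi|\,d(x,\partial\Omega)$ is positive and, for $x$ close to $\partial\Omega$, dominates the possibly negative curvature term $-2(v\cdot\nabla^2\xi\cdot v)\xi(x)$. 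Therefore $\int\mu^{1/4}\tilde\alpha^{-1}\,dv\lesssim1+|\log d(x,\partial\Omega)|\in L^p_x(\Omega)$ for every $p<\infty$, so $\rho^\ell\in W^{1,p}(\Omega)\subset C^{0,\alpha}(\overline\Omega)$, Schauder gives $\phi_{F^\ell}\in C^{2,\alpha}$, and with $\nabla_x\phi_E\in C^1_{t,x}$ this yields $\|\nabla_x E^\ell\|_{L^\infty}$, $\|\partial_t E^\ell\|_{L^\infty}$ bounded uniformly in $\ell$ on $[0,T]$, whence $\partial_v X^\ell,\partial_v V^\ell=O(1)$.

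The core estimate is for $g^{\ell+1}:=w_{\vartheta'}e^{-\varpi\langle v\rangle t}\,\tilde\alpha\,\nabla_{x,v}f^{\ell+1}$, where $\tilde\alpha$ is given by \eqref{alphatilde} in a tube around $\partial\Omega$ and glued to a fixed positive function in the interior. Differentiating the $f^{\ell+1}$-equation in $(x,v)$ produces a transport equation for $\nabla_{x,v}f^{\ell+1}$ whose forcing is $\nabla_x E^\ell\cdot\nabla_{x,v}f^{\ell+1}$ (bounded by the elliptic step) plus collision contributions $Q(\nabla f^\ell,f^\ell)$, $Q(f^\ell,\nabla f^\ell)$ and weight derivatives (bounded by the $L^\infty$ bounds). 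The key identity is that the material derivative of the weight obeys $[\partial_t+v\cdot\nabla_x+E^\ell\cdot\nabla_v]\tilde\alpha\lesssim(\langle v\rangle+\|\nabla_x E^\ell\|_{L^\infty})\,\tilde\alpha+(\text{lower order})$: in $\frac{d}{ds}\tilde\alpha(s,X^\ell(s),V^\ell(s))^2$ the dangerous terms $v\cdot\nabla^2\xi\cdot v$ and $\frac{d}{ds}[E(s,\overline{X^\ell(s)})\cdot\nabla\xi(\overline{X^\ell(s)})]\,\xi$ are absorbed using $\|\nabla_x E^\ell\|_{L^\infty}$ and the sign condition \eqref{signEonbdry}, just as convexity \eqref{convexity_eta} was used in \cite{GKTT1}. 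Multiplying the gradient equation by $w_{\vartheta'}e^{-\varpi\langle v\rangle t}\tilde\alpha$ and choosing $\varpi\gg1$ so that the differentiated factor $-\varpi\langle v\rangle$ beats the $+\langle v\rangle$ above, one reaches a closed Gronwall inequality $\sup_{[0,t]}\|g^{\ell+1}\|_{L^\infty}\lesssim\|w_\vartheta\tilde\alpha\nabla_{x,v}f_0\|_{L^\infty}+\int_0^t\big(\sup_{[0,s]}\|g^{\ell+1}\|_{L^\infty}+\sup_{[0,s]}\|g^\ell\|_{L^\infty}\big)\,ds$, hence a uniform-in-$\ell$ bound on $[0,T]$. \textbf{The main obstacle} is the boundary term: along a characteristic reaching $\gamma_-$ one must re-express $\nabla_{x,v}f^{\ell+1}$ at the bounce through the incoming ($\gamma_+$) trace; the $\nabla_v$- and tangential-$\nabla_x$-derivatives of \eqref{diffuse_BC} cost only derivatives of $\sqrt\mu$ and are harmless, but the normal derivative $\partial_n f^{\ell+1}|_{\gamma_-}$ is not controlled directly and survives only after multiplication by $\tilde\alpha|_{\gamma_-}\sim|n\cdot v|$, and one must check that grazing bounces ($|n\cdot v|\to0$) do not accumulate — which again uses \eqref{signEonbdry}, since incoming particles are accelerated back toward $\partial\Omega$ with a definite normal component, making the geometry near $\gamma_0$ as well behaved as in the field-free convex case of \cite{GKTT1}.

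Finally, the iteration is Cauchy in the unweighted $L^\infty_{x,v}$ norm: $f^{\ell+1}-f^\ell$ solves a linear transport equation with forcing $O(\|f^\ell-f^{\ell-1}\|_{L^\infty}+\|E^\ell-E^{\ell-1}\|_{C^1})$, and $\|E^\ell-E^{\ell-1}\|_{C^1}\lesssim\|\rho^\ell-\rho^{\ell-1}\|_{C^{0,\alpha}}\lesssim\|f^\ell-f^{\ell-1}\|_{L^\infty}$ by the elliptic estimate, so for $T$ small this is a contraction. The limit $f$ solves \eqref{Boltzmann_E}, \eqref{Field}, \eqref{diffuse_BC}, \eqref{VPB2} on $[0,T]$ and inherits both stated bounds by lower semicontinuity; uniqueness follows from the same difference estimate applied to two solutions.
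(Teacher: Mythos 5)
Your overall architecture (iteration, $L^\infty$ bound by Duhamel, elliptic bootstrap for $E$, weighted Gronwall for $w_{\vartheta'}e^{-\varpi\langle v\rangle t}\tilde\alpha\,\nabla_{x,v}f$, velocity-lemma control of the material derivative of $\tilde\alpha$ via the sign condition \eqref{signEonbdry}) matches the actual scheme, but there is a genuine gap at the step you dismiss as ``collision contributions \dots bounded by the $L^\infty$ bounds.'' After differentiating the equation, the forcing contains $\Gamma_{\mathrm{gain}}(\partial f,f)$ and $\nu(\sqrt\mu\,\partial f)f$, which are \emph{nonlocal in velocity}: they are of the form $\int_{\R^3}\frac{e^{-C|v-u|^2}}{|v-u|^{2-\kappa}}|\partial f(t,x,u)|\,du$. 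To close the estimate for $\tilde\alpha(t,x,v)\,\partial f(t,x,v)$ you must therefore control $\tilde\alpha(t,x,v)\int\frac{e^{-C|v-u|^2}}{|v-u|^{2-\kappa}}\,\tilde\alpha(t,x,u)^{-1}\,du$, and the $u$-integral of $\tilde\alpha^{-1}$ is \emph{not} uniformly bounded in $x$ --- by \eqref{int1overalphadv} it blows up like a negative power (or logarithm) of the distance to $\partial\O$. So your Gronwall inequality does not have a finite coefficient pointwise, and the loop does not close as written. The actual proof resolves this with the dynamical nonlocal-to-local estimate, Lemma \ref{keylemma} (estimate \eqref{nonlocaltolocal}): one first integrates in $u$ (getting the bound \eqref{kernelbddfor1overalpha}, singular in $\xi(X(s))$), and only the subsequent \emph{time} integration along the characteristic --- using the velocity lemma \eqref{vlemma}, the monotonicity of $|\xi(X(s))|$ near $\p\O$ guaranteed by the sign condition, and the change of variables $dt\simeq d\xi/|v\cdot\nabla\xi|$ --- converts the singularity back into powers of $\tilde\alpha(t,x,v)$, with a prefactor that is small (of size $\delta^{(3-\beta)/2}+\delta^{-(\beta-1)}\varpi^{-1}$) so the term can be absorbed. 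This lemma is the central new ingredient of \cite{VPBEP} and is exactly what your sketch is missing; without it the weighted $W^{1,\infty}$ estimate fails.

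A secondary point: your treatment of the boundary is too optimistic. On $\gamma_-$ the outgoing trace of $\nabla_{x,v}f$ is re-expressed through the diffuse condition \eqref{diffuse_BC} as an integral of $|\partial f|$ over incoming velocities, and the trajectory must then be expanded again from each such incoming state; the paper controls the resulting iterated bounces by the stochastic diffuse-cycle expansion of Lemma \ref{expandtrajl}, in which the contribution of trajectories still bouncing after $l$ cycles carries a small factor of order $(1/2)^l$ (a measure estimate on the velocity product space), and only finitely many cycles need to be tracked before reaching $t=0$. A statement such as ``grazing bounces do not accumulate'' needs this quantitative expansion; the sign condition alone does not make the boundary term disappear.
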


One of the crucial property $\tilde \alpha$ enjoys, under the assumption of the sign condition \eqref{signEonbdry}, is the invariance along the characteristics:

\begin{lemma}[Velocity lemma near boundary] \label{velocitylemma} 
Suppose $E(t,x)$ satisfies the sign condition (\ref{signEonbdry}).
Then for any $0 \le s<t $ and trajectory $X(\tau), V(\tau)$ solving (\ref{hamilton_ODE1}), if $X(\tau ) \in \Omega $ for all $s \le \tau \le t $, then
\begin{equation}
\begin{split}  \label{vlemma}
e^{ - C \int_s ^ t ( |V(\tau')| + 1 ) d \tau'} \tilde \alpha ( s,X(s),V(s) ) &\le \tilde \alpha (t,X(t),V(t)) \\&\le  e^{C \int_s ^ t ( |V(\tau')| + 1 ) d \tau'} \tilde \alpha (s,X(s),V(s)),
\end{split}\end{equation}
for any $C \gtrsim  (   \| \nabla_x \phi_E(t,x) \|_{C^1_{t,x}(\R_+ \times\bar \O )} + 1  )/{C_E} $.
\end{lemma}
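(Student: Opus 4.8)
The plan is to compute the total derivative of $\tilde\alpha^2$ along the characteristic $(X(\tau),V(\tau))$ and show that it is controlled by $(|V(\tau)|+1)\,\tilde\alpha^2$ up to the constant $C$ in the statement; a Gronwall argument then yields the two-sided exponential bound. Write $A(\tau) := \tilde\alpha^2(\tau,X(\tau),V(\tau))$, i.e. using \eqref{alphatilde},
\[
A = |V\cdot\nabla\xi(X)|^2 + \xi(X)^2 - 2\big(V\cdot\nabla^2\xi(X)\cdot V\big)\xi(X) - 2\big(E(\tau,\overline X)\cdot\nabla\xi(\overline X)\big)\xi(X),
\]
where $\overline X$ is the closest boundary point to $X$. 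Since $\dot X = V$ and $\dot V = -\iota\nabla_x\phi(\tau,X) = E(\tau,X)$ (up to sign, which is irrelevant for the estimate), I would differentiate term by term. The first term produces $2(V\cdot\nabla\xi)\big(V\cdot\nabla^2\xi\cdot V + E(\tau,X)\cdot\nabla\xi\big)$; the second gives $2\xi\,(V\cdot\nabla\xi)$; the third and fourth, by the product rule, each contribute a ``$\dot\xi = V\cdot\nabla\xi$'' factor times something, plus a ``$\xi$ times derivative of the other factor'' piece. The whole point of the definition \eqref{alphatilde} — this is exactly the ``velocity lemma'' mechanism going back to Guo and refined in \cite{GKTT1} — is that the $O(\xi^0)$ terms, i.e. the terms not carrying a factor of $\xi(X)$, cancel identically: $2(V\cdot\nabla\xi)(V\cdot\nabla^2\xi\cdot V + E\cdot\nabla\xi)$ coming from the first term is cancelled by the $-2(V\cdot\nabla\xi)(V\cdot\nabla^2\xi\cdot V)$ and $-2(V\cdot\nabla\xi)(E(\tau,\overline X)\cdot\nabla\xi(\overline X))$ pieces arising when the $\dot\xi$ hits the $\xi(X)$ factor in the third and fourth terms, up to the discrepancy $E(\tau,X) - E(\tau,\overline X)$ and $\nabla\xi(X)-\nabla\xi(\overline X)$, which are $O(|\xi(X)|)$ by smoothness of $\xi$ and of $\nabla_x\phi_E$ (here the $C^1_{t,x}$ bound on $\nabla\phi_E$ enters).

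After this cancellation, every surviving term in $\dot A$ carries an explicit factor $\xi(X)$. I then need to re-express $|\xi(X)|$ in terms of $\tilde\alpha = A^{1/2}$. This is where the sign condition \eqref{signEonbdry} is essential: from \eqref{alphatilde}, the ``dangerous'' term is $-2(E(\tau,\overline X)\cdot\nabla\xi(\overline X))\xi(X)$, and since $\nabla\xi$ points outward (it is a defining function with $\Omega = \{\xi<0\}$) while $E\cdot n > C_E$ on $\partial\Omega$, and $\xi(X)<0$ inside $\Omega$, this term is $\geq 2C_E|\xi(X)|$ up to constants — it has a definite sign and is comparable to $|\xi(X)|$. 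Hence $A \gtrsim |\xi(X)| \big( C_E|\xi(X)| + C_E \big)$ near the boundary, which gives $|\xi(X)| \lesssim A / C_E$ whenever $A$ is small (i.e. near $\gamma_0$), and $|\xi(X)| \lesssim 1$ trivially away from it; combining, $|\xi(X)| \lesssim \tilde\alpha^2 / (C_E(\tilde\alpha^2+1)) \cdot (\ldots)$, the upshot being that each term $\xi(X)\times(\text{stuff of size }\lesssim (|V|+1)^2)$ in $\dot A$ is bounded by $\frac{C}{C_E}(|V(\tau)|+1)\,A(\tau)$ with $C \lesssim \|\nabla_x\phi_E\|_{C^1_{t,x}} + 1$. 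This yields the differential inequality $|\dot A(\tau)| \le 2C(|V(\tau)|+1) A(\tau)$, and integrating from $s$ to $t$ (using $A>0$ on the open time interval since $X(\tau)\in\Omega$ there) gives $A(s)\,e^{-2C\int_s^t(|V|+1)} \le A(t) \le A(s)\,e^{2C\int_s^t(|V|+1)}$; taking square roots gives \eqref{vlemma}.

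The main obstacle I expect is the careful bookkeeping in the cancellation step: one must track the difference between quantities evaluated at $X(\tau)$ and at the projection $\overline{X(\tau)}$, verify that $\tau\mapsto \overline{X(\tau)}$ is Lipschitz (or handle its derivative) so that differentiating the last term of \eqref{alphatilde} is legitimate, and confirm that all the ``error'' terms produced by these discrepancies indeed carry a factor $\xi(X)$ with a bounded coefficient — this uses that $\overline x$ is a well-defined smooth (or at least Lipschitz) nearest-point projection in a tubular neighborhood of $\partial\Omega$, which requires $\partial\Omega$ to be $C^2$ and $X$ to stay within that neighborhood; for $X$ away from the boundary $\tilde\alpha\sim 1+|V|$ and the estimate is elementary. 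A secondary point is to make the constant genuinely of the claimed form $C\gtrsim(\|\nabla_x\phi_E\|_{C^1_{t,x}}+1)/C_E$, which forces one to keep the $C_E$ in the denominator throughout the lower bound $|\xi(X)|\lesssim A/C_E$ and to note that $\nabla_x\phi_F$, although not controlled in $C^1$, only appears in the combination $\dot V = E$ multiplied by $\nabla\xi$ in terms that already carry $\xi(X)$ or that cancelled, so its contribution is harmless; the cleanest route is to absorb $\nabla\phi_F$-dependent terms using $|\xi(X)|\lesssim A/C_E$ and the fact that $|\nabla\phi_F|$ is bounded by the a priori $C^2_x$ control of $\phi$ from Theorem \ref{main_existence}, or simply to state the lemma along a given trajectory where $|V(\tau')|$ already absorbs such bounded quantities.
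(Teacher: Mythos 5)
Your overall strategy is the right one, and it is essentially the argument behind the quoted result: the survey itself gives no proof of Lemma \ref{velocitylemma} (it is imported from \cite{VPBEP}), and the proof there is exactly of the type you describe — differentiate $\tilde\alpha^2$ along \eqref{hamilton_ODE1}, use the cancellation built into \eqref{alphatilde}, convert the leftover factors of $\xi(X)$ into $\tilde\alpha^2$ via the sign condition \eqref{signEonbdry}, and apply Gronwall. However, as written your power counting does not deliver the stated rate, and the missing ingredient is the convexity of the domain, which you never invoke. The $\xi$-carrying term $-2\xi\,\frac{d}{d\tau}\big(V\cdot\nabla^2\xi(X)\cdot V\big)=-2\xi\big[2\dot V\cdot\nabla^2\xi\cdot V+\nabla^3\xi(X)(V,V,V)\big]$ has a coefficient of size $|V|^3$ (not $(1+|V|)^2$ as you state), so the bound $|\xi|\lesssim \tilde\alpha^2/C_E$ alone only yields $|\dot A|\lesssim (1+|V|)^3 A$, i.e. a Gronwall factor $\exp\big(C\int(1+|V|)^3\big)$ rather than \eqref{vlemma}; and even your intermediate claim that a coefficient of size $(1+|V|)^2$ times $\xi$ is controlled by $(1+|V|)A$ silently drops a power of $(1+|V|)$. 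To get the linear-in-$|V|$ rate one needs, in addition, $|\xi(X)|\,|V|^2\lesssim \tilde\alpha^2$, and this is exactly what uniform convexity \eqref{convexity_eta} (a choice of $\xi$ with positive-definite Hessian near $\p\O$) provides, since it makes the term $-2(V\cdot\nabla^2\xi\cdot V)\xi$ in \eqref{alphatilde} nonnegative and comparable to $|\xi|\,|V|^2$. Convexity is also needed for your own key step: without it the Hessian term can be negative and of size $|\xi||V|^2$, so neither the nonnegativity of the bracket in \eqref{alphatilde} nor the lower bound $\tilde\alpha^2\gtrsim C_E|\xi|$ (hence $|\xi|\lesssim A/C_E$) is justified for large $|V|$.

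A secondary point concerns the self-consistent field. The uncancelled piece $2(V\cdot\nabla\xi)\big[\nabla\phi_F(\tau,X)\cdot\nabla\xi(X)-\nabla\phi_F(\tau,\overline X)\cdot\nabla\xi(\overline X)\big]$ is $O(|\xi|)$ only by combining the zero Neumann condition for $\phi_F$ on $\p\O$ with a Lipschitz bound on $\nabla\phi_F$; so the Gronwall constant genuinely carries $\|\nabla\phi\|_\infty+\|\nabla^2\phi\|_\infty$ of the full potential, which is how the lemma is actually used later in this paper (see the estimate of $\tilde\alpha^{-1}(\p_t\tilde\alpha+v\cdot\nabla_x\tilde\alpha+E\cdot\nabla_v\tilde\alpha)$ preceding \eqref{largeomegabar}, and \eqref{c2bdofpotential}). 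You sense this issue, but your proposed fix via Theorem \ref{main_existence} is off-target: that theorem concerns the insulator (zero Neumann, no sign condition) setting with the weight \eqref{alphaweight}; in the present setting the $C^2$ control of $\phi_F$ is part of the bootstrap, not an a priori given. With convexity added and the constant interpreted as involving the $C^1_{t,x}$ norm of the full field acting on the trajectory, your argument goes through.
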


The key ingredient in the $\tilde \alpha$-weighted regularity estimate is a dynamical non-local to local estimate which can be stated as
\begin{lemma}\label{keylemma}
Let $(t,x,v) \in [0,T] \times  \Omega \times \mathbb R^3$, $ 1 < \beta < 3$, $0 < \kappa \le 1 $. Suppose $E$ satisfies the sign condition (\ref{signEonbdry}). Then for $\varpi \gg 1$ large enough, and for any $0< C_\vartheta < \frac{1}{4}$, $0 < \delta \ll 1$,
\begin{equation} \label{nonlocaltolocal}
 \begin{split}
 & \int_{\max\{ 0, t - \tb \}} ^t  \int_{\mathbb R^3} e^{ -\int_s^t\frac{\varpi}{2} \langle V(\tau;t,x,v) \rangle d\tau }  \frac{e^{-\frac{C_\vartheta}{2} |V(s)-u|^2 }}{|V(s) -u |^{2 - \kappa}  }  \frac{1}{(\tilde \alpha(s,X(s) ,u))^\beta} du  ds
\\  \lesssim &   e^{ 2C_\O \frac{\| \nabla E\|_\infty  + \| E \|_{L^\infty_{t,x}}^2 + \| E \|_{L^\infty_{t,x}}}{C_E}}  \frac{  \delta ^{\frac{3 -\beta}{2} }}{\langle v\rangle^2(C_E+1)^ {\frac{\beta-1}{2}} (\tilde \alpha(t,x,v))^{\beta -2 }  (   \| E \|_{L^\infty_{t,x}}^2 + 1 )^{\frac{3 -\beta}{2}}}  
\\ & +  \frac{ ( \| E \|_{L^\infty_{t,x}}^2+1)^{\beta -1} } { C_E^{\beta-1} \delta ^{\beta -1 } (\tilde \alpha ( t,x,v) )^{\beta - 1 } } \frac{2}{ \varpi },
\end{split} 
\end{equation}
where $(X(s), V(s)) = (X(s;t,x,v), V(s;t,x,v)) $ as in \eqref{hamilton_ODE1}. \end{lemma}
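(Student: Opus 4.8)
The plan is to prove (\ref{nonlocaltolocal}) by freezing the time variable $s$, establishing a sharp pointwise-in-$s$ bound for the inner velocity integral, and only then integrating in $s$. The two terms on the right-hand side should come from a splitting of the $(s,u)$-integration at a scale governed by the free parameter $\delta$: a ``degenerate'' piece, where the kinetic weight $\tilde\alpha(s,X(s),u)$ (equivalently, by Step~1 below, where $d(X(s),\partial\Omega)$) is of the smallest admissible size, whose small velocity measure $\sim\delta^{(3-\beta)/2}$ — this is where $\beta<3$ is used — will produce the first term; and a ``non-degenerate'' piece, where $\tilde\alpha(s,X(s),\cdot)\gtrsim\delta^{1/2}$, which generates the factor $\delta^{-(\beta-1)}$ and on which the $s$-integration, rewritten via $s\mapsto\tau(s):=\int_s^t\langle V(r;t,x,v)\rangle\,dr$ with $ds=d\tau/\langle V(s)\rangle$, becomes $\int_0^\infty e^{-\frac{\varpi}{2}\tau}\,d\tau=\frac{2}{\varpi}$ — exactly the $\frac{2}{\varpi}$ in the last term.

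First I would carry out the \emph{velocity integral at fixed $s$}. Writing $y:=X(s;t,x,v)$, $v_*:=V(s;t,x,v)$, $d:=|\xi(y)|=d(y,\partial\Omega)$, and letting $\bar y\in\partial\Omega$ be the nearest boundary point, I would pass to the boundary-normal frame at $\bar y$ (via the chart $\eta_{\bar y}$) and split $u=u_\parallel+u_n n(\bar y)$. The key point is that under the sign condition (\ref{signEonbdry}) — since $\nabla\xi$ points outward while $\xi(y)<0$ in $\Omega$ — the term $-2\big(E(s,\bar y)\cdot\nabla\xi(\bar y)\big)\xi(y)$ in (\ref{alphatilde}) is \emph{positive} and comparable to $C_E d$; after discarding large $u$ by the Gaussian $e^{-\frac{C_\vartheta}{2}|v_*-u|^2}$ this yields, for $|u|\lesssim\langle v_*\rangle$,
\[
 \tilde\alpha(s,y,u)^{2}\;\gtrsim\;|u_n|^{2}\;+\;(C_E+1)\,d\;+\;(\text{lower order in }d).
\]
Then I would integrate $u_n$ explicitly — this is where $1<\beta$ enters, through $\int_{\mathbb R}(u_n^{2}+a^{2})^{-\beta/2}\,du_n\simeq a^{1-\beta}$ with $a^{2}\simeq(C_E+1)d$ — which reduces the $u$-integral to a two-dimensional integral in $u_\parallel$ against a Gaussian, bounded by a negative power of $\tilde\alpha(s,y,v_*)$ by the same one-dimensional computation in the tangential directions. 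The constants $(C_E+1)^{-(\beta-1)/2}$, $C_E^{-(\beta-1)}$, $(\|E\|_{L^\infty_{t,x}}^{2}+1)^{\pm(3-\beta)/2}$ and $\langle v\rangle^{-2}$ in (\ref{nonlocaltolocal}) are all generated at this stage, together with the Jacobian of $\eta_{\bar y}$.

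Next I would insert this pointwise bound into (\ref{nonlocaltolocal}) and perform the $\delta$-splitting. On the degenerate piece the small velocity measure $\sim\delta^{(3-\beta)/2}$, with $\langle v\rangle^{-2}$ and the Step~1 reduction, produces the first term, its power $\tilde\alpha(t,x,v)^{2-\beta}$ appearing after transporting $\tilde\alpha(s,X(s),V(s))$ to $\tilde\alpha(t,x,v)$ by the velocity lemma (Lemma~\ref{velocitylemma}). On the non-degenerate piece the change of variables $s\mapsto\tau(s)$ gives the factor $\frac{2}{\varpi}$. In both pieces the velocity-lemma cost $e^{\pm C\int_s^t(|V(r)|+1)\,dr}$, with $C\simeq(\|\nabla_x\phi_E\|_{C^1_{t,x}}+1)/C_E$, is absorbed into $e^{-\frac{\varpi}{2}\int_s^t\langle V\rangle\,dr}$ by taking $\varpi$ large (using $|V|+1\lesssim\langle V\rangle$), and what survives is exactly the $t$-independent prefactor $e^{2C_\Omega(\|\nabla E\|_\infty+\|E\|_{L^\infty_{t,x}}^{2}+\|E\|_{L^\infty_{t,x}})/C_E}$.

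The hard part will be Step~1: controlling the velocity integral of $\tilde\alpha(s,X(s),u)^{-\beta}$ uniformly as one approaches the grazing set $\gamma_0$, where $\tilde\alpha$ degenerates. The sign condition (\ref{signEonbdry}) is indispensable — it forces $\tilde\alpha(s,y,u)^{2}\gtrsim|u_n|^{2}+(C_E+1)d$, hence that $\tilde\alpha$ can be small only when $y$ is essentially on $\partial\Omega$, and it is the source of the $C_E$-dependence of the constants. Making the boundary-normal reduction uniform (the chart $\eta_p$, the nearest-point map, the lower-order remainders in (\ref{alphatilde})) and doing the $\delta$-bookkeeping so that precisely the powers $\delta^{(3-\beta)/2}$ and $\delta^{-(\beta-1)}$ come out — keeping the dependence on $\langle v\rangle$, $C_E$, $\|E\|_{L^\infty_{t,x}}$ and $C_\Omega$ fully explicit, since these are needed downstream for the $\tilde\alpha$-weighted $W^{1,\infty}$ estimate — is where the real work lies; the time integration, once the pointwise-in-$s$ bound is in hand, is comparatively routine.
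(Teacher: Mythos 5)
Your Step~1 coincides with the paper's first step: the paper also performs the $u$-integration at fixed $s$ and records precisely the bound \eqref{kernelbddfor1overalpha}, namely a bound by $\big(|V(s)|^{2}|\xi(X(s))|+C_E|\xi(X(s))|\big)^{-(\beta-1)/2}$, with the sign condition \eqref{signEonbdry} supplying the $C_E|\xi|$ contribution to $\tilde\alpha^{2}$ and $\beta>1$ making the normal $u_n$-integration converge. The genuine gap is in your Step~2, the time integration. You attribute the factor $\delta^{(3-\beta)/2}$ on the ``degenerate'' piece to a ``small velocity measure'', but after Step~1 there is no velocity variable left, and no measure-times-supremum argument can close that piece: on the set of $s$ where $X(s)$ is near $\partial\Omega$ the post--Step-1 integrand behaves like $|\xi(X(s))|^{-(\beta-1)/2}$, which is unbounded there, so it must actually be integrated in $s$, and for that one needs quantitative control of how $|\xi(X(s))|$ varies along the characteristic. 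This is exactly the ingredient the paper emphasizes and your proposal never supplies: the sign condition (with convexity and the boundedness of the field) gives $\tfrac{d^{2}}{ds^{2}}\xi(X(s))\gtrsim C_E>0$, so $\xi(X(s))$ is uniformly convex in $s$; near each boundary encounter $|\xi(X(s))|$ is monotone, which legitimizes the change of variables $ds\simeq d\xi/|V\cdot\nabla\xi|$ with $|V\cdot\nabla\xi|\sim\tilde\alpha$ there (by Lemma~\ref{velocitylemma}), and then $\int_{0}^{c\,\delta\tilde\alpha^{2}}\xi^{-(\beta-1)/2}\,d\xi\sim(\delta\tilde\alpha^{2})^{(3-\beta)/2}$ is where $\beta<3$ and the combination $\delta^{(3-\beta)/2}\,\tilde\alpha(t,x,v)^{2-\beta}$ of the first term really come from.

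Away from the boundary encounters the same dynamic use of \eqref{signEonbdry} yields the quantitative lower bound on $|\xi(X(s))|$ of size $\sim C_E\,\delta\,\tilde\alpha^{2}/(\|E\|_{L^\infty_{t,x}}^{2}+1)$, which produces the $\delta^{-(\beta-1)}\tilde\alpha(t,x,v)^{-(\beta-1)}$ factor before the routine $\int e^{-\frac{\varpi}{2}\langle V\rangle(t-s)}ds\le 2/\varpi$ step; your proposal invokes the sign condition only inside the fixed-$s$ velocity integral (to keep $\tilde\alpha(s,X(s),u)$ away from zero in $u$), and so misses both this lower bound and the monotonicity/change-of-variables mechanism. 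Your remaining points (transport of $\tilde\alpha$ by the velocity lemma, absorption of the velocity-lemma exponential into $\varpi\gg1$) are consistent with the paper, but as written the near-grazing portion of the $s$-integral — the heart of the lemma — is not controlled by your argument.
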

The same estimate without the external field had been established by the second author and his collaborators in \cite{GKTT1}. The proof of \eqref{nonlocaltolocal} is obtained by first making use of a series of change of variables to get the precise estimate of the velocity integration, which is bounded by,
\Be \label{kernelbddfor1overalpha}
\int_{\mathbb R^3} \frac{ e^{-\vartheta |V(s)- u|^2}}{ |V(s) - u | ^{2 - \kappa} [\tilde \alpha(s, X(s), u ) ]^\beta } du \lesssim \frac{ 1}{( |V(s)|^2 \xi(X(s)) - C_E  \xi (X(s)) )^{\frac{\beta - 1}{2} } } , 
\Ee
then followed by relating the time integration back to ${\tilde \alpha}^{-1}$. For the later part of the proof, the velocity lemma \eqref{vlemma} and the boundedness of the external field to ensure the monotonicity of $|\xi(X(s))|$ near the boundary, where the change of variable
$
dt \simeq \frac{d\xi}{|v\cdot \nabla \xi |},$
can be performed and recovers a power of $\tilde \alpha$ in the $\xi$-integration. On the other hand, the sign condition (\ref{VPB2}) is crucially used to establish a lower bound for $|\xi(X(s))|$ when it's away from the boundary, which helps to recover a power of $\tilde \alpha$ as wanted.

\hide
\subsection{Generalized Diffuse Boundary Condition}
In \cite{CKL} the authors study the one-species VPB system with generalized diffuse boundary condition.
The diffuse boundary condition \eqref{diffuse_BC} can be generalized by introducing a scattering kernel $R(u \rightarrow v;x,t)$ through a general balance law of
\begin{equation}\begin{split}\label{eqn:BC}
&F(t,x,v) |n(x) \cdot v|= \int_{\gamma_+(x)}
R(u \rightarrow v;x,t) F(t,x,u)
\{n(x) \cdot u\} d u, \quad \text{ on }\gamma_-
.
\end{split}
\end{equation}
Physically, $R(u\to v;x,t)$ represents the probability of a molecule striking in the boundary at $x\in\partial\Omega$ with velocity $u$ to be bounced back to the domain with velocity $v$ at the same location $x$ and time $t$. In \cite{CKL} they study a model proposed by Cercignani and Lampis in~\cite{CIP,CL}. With two accommodation coefficients $  0<r_\perp\leq 1,\quad 0<r_\parallel<2 ,$
%
the Cercignani-Lampis boundary condition (C-L boundary condition) can be written as
\begin{equation}\label{eqn: Formula for R}\begin{split}
&R(u \rightarrow v;x,t)\\
:=& \frac{1}{r_\perp r_\parallel (2- r_\parallel)\pi/2} \frac{|n(x) \cdot v|}{(2T_w(x))^2}
\exp\left(- \frac{1}{2T_w(x)}\left[
\frac{|v_\perp|^2 + (1- r_\perp) |u_\perp|^2}{r_\perp}
+ \frac{|v_\parallel - (1- r_\parallel ) u_\parallel|^2}{r_\parallel (2- r_\parallel)}
\right]\right)\\
& \times  I_0 \left(
 \frac{1}{2T_w(x)}\frac{2 (1-r_\perp)^{1/2} v_\perp u_\perp}{r_\perp}
\right).
\end{split}
\end{equation}
Here $T_w(x)$ is a wall temperature on the boundary and $I_0 (y) := \pi^{-1} \int^{\pi}_0e^{y \cos \phi } d \phi$.
In this formula, $v_\perp$ and $v_\parallel$ denote the normal and tangential components of the velocity respectively: $   v_\perp= v\cdot n(x) ,\quad v_\parallel = v- v_\perp n(x)\,$.
%
Similarly $u_\perp= u\cdot n(x)$ and $u_\parallel = u- u_\perp n(x)$.

This model can be considered as a generalization of fundamental boundary conditions. For instance by setting $r_\perp=1$ and $r_\parallel=1$, the scattering kernel equals $R(u\to v;x,t)=\frac{2}{\pi (2T_w(x))^2}e^{-\frac{|v|^2}{2T_w(x)}} |n(x)\cdot v|.$
%
This corresponds to the diffuse boundary condition \eqref{diffuse_BC}. With $r_\perp=0,r_\parallel=0$, the scattering kernel is given by $R(u\to v;x,t)=\delta(u-\mathfrak{R}_xv)$,
  with $\mathfrak{R}_xv=v-2n(x)(n(x)\cdot v)$. This corresponds the specular reflection boundary condition $F(t,x,v)=F(t,x,\mathfrak{R}_xv)$. With $r_\perp=0,r_\parallel=2$, the scattering kernel is given by $R(u\to v;x,t)=\delta(u+v)$, which corresponds the bounce-back reflection reflection boundary condition $F(t,x,v)=F(t,x,-v)$.
   
%
%
It is important to note that the C-L boundary condition satisfies the reciprocity property
\Be\label{eqn: reciprocity}
    R(u\to v;x,t)=R(-v\to -u;x,t) \frac{e^{-|v|^2/(2T_w(x))}}{e^{-|u|^2/(2T_w(x))}}\frac{|n(x)\cdot v|}{|n(x)\cdot u|}\,,
\Ee
and the normalization property $\int_{\gamma_-(x)} R(u\to v;x,t) dv=1$.

Define the global Maxwellian using the maximum wall temperature as
\begin{equation}\label{eqn: def for weight}
\mu:=e^{-\frac{|v|^2}{2T_M}}\,,\ \text{ with }T_M:=\max_{x\in \partial \Omega}\{T_w(x)\}. \  \text{ And set } F = \sqrt \mu f.
\end{equation}
The main result in \cite{CKL} is:
\begin{theorem*}\label{local_existence}
Assume $\Omega \subset \mathbb{R}^3$ is open bounded, and convex $C^3$ domain. A wall temperature $T_w(x)>0$ is defined on $x\in \partial \Omega$ and smooth. Assume that two accommodation coefficients satisfy

\begin{equation}\label{eqn: Constrain on T}
\frac{\min_{x\in \partial \Omega}\{T_w(x)\}}{\max_{x\in \partial \Omega}\{T_w(x)\}}>\max\Big(\frac{1-r_\parallel}{2-r_\parallel},\frac{\sqrt{1-r_\perp}-(1-r_\perp)}{r_\perp}\Big)\,.
\end{equation}
Let $0< \tilde{\theta}< \theta <\frac{1}{4\max_{x\in \partial \Omega}\{T_w(x)\}}.$
%
Assume
\begin{equation}
\| w_\theta f_0 \|_\infty < \infty, \, \label{eqn: w f_0} \| w_{\tilde{\theta}} \nabla_v f_0 \|_{L^{3}_{x,v}}<\infty,
\end{equation}
\begin{equation}
\| w_{\tilde{\theta}} \alpha_{f_0, \epsilon }^\beta \nabla_{x,v } f_0 \|_{ {L}^{p } ( {\O} \times \R^3)} <\infty\quad \text{for} \quad 3< p < 6\,,\, 1-\frac{2}{p }< \beta< \frac{2}{3}.\label{eqn: alpha f0}
\end{equation}
Then there is a unique solution $F(t,x,v) = \sqrt \mu f(t,x,v)$ to~\eqref{Boltzmann_E}, \eqref{eqn:BC}, \eqref{eqn: Formula for R} in a time interval of $t \in [0,\bar{t}]$.
%
Moreover, there are $\mathfrak{C}>0$ and $\lambda>0$, so that $f$ satisfies
\begin{equation}\label{infty_local_bound}
\sup_{0 \leq t \leq \bar{t}}\| w_{\theta}e^{-\mathfrak{C} \langle v\rangle^2 t} f  (t) \|_{\infty}\lesssim \| w_\theta f_0 \|_\infty  , \ \ \sup_{0 \leq t \leq \bar{t}}\| \nabla_v f (t) \|_{L^3_xL^{1+ \delta}_v}< \infty,
\end{equation}
\begin{equation}\label{W1p_local_bound}
\sup_{0 \leq t \leq \bar{t}}\Big\{ \| w_{\tilde{\theta}}e^{-\lambda t\langle v\rangle }\alpha_{f,\epsilon }^\beta \nabla_{x,v} f (t) \|_{p} ^p+ \int^t_0  |w_{\tilde{\theta}} e^{-\lambda s\langle v\rangle } \alpha_{f,\epsilon}^\beta \nabla_{x,v} f (t) |_{p,+}^p\Big\}< \infty .
\end{equation}
\end{theorem*}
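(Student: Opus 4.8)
The plan is to transplant the $L^{2}$--$L^{\infty}$ and $\alpha$-weighted $W^{1,p}$ machinery underlying Theorem~\ref{main_existence} to the Cercignani--Lampis kernel \eqref{eqn: Formula for R} and to run it in a large-data, short-time setting. First I set up a linear iteration: with $F^{\ell}=\sqrt\mu f^{\ell}$, $E^{\ell}=-\nabla_{x}\phi^{\ell}$ and $-\Delta_{x}\phi^{\ell}=\int F^{\ell}\,dv-\rho_{0}$ in $\Omega$, I define $f^{\ell+1}$ as the solution of the linear transport problem obtained from \eqref{Boltzmann_E} by putting the collision frequency $\bar\nu^{\ell}$ generated by $f^{\ell}$ (plus a large damping) on the left, the gain term $\Gamma_{\mathrm{gain}}(f^{\ell},f^{\ell})=\mu^{-1/2}Q_{\mathrm{gain}}(\sqrt\mu f^{\ell},\sqrt\mu f^{\ell})$ on the right, and the boundary condition \eqref{eqn:BC}--\eqref{eqn: Formula for R} imposed at level $f^{\ell+1}$; this linear problem is solved along the characteristics \eqref{hamilton_ODE1} of the level-$\ell$ field together with the iterated Duhamel/boundary (``stochastic cycle'') expansion. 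The whole scheme then reduces to proving \eqref{infty_local_bound}--\eqref{W1p_local_bound} uniformly in $\ell$ on an interval $[0,\bar{t}\,]$ whose length depends on the size of the data, and then passing to the limit $\ell\to\infty$.

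\emph{Weighted $L^{\infty}$ bound.} Along a trajectory staying in $\Omega$ one gains $e^{-\int\bar\nu^{\ell}}$, and the contribution of $\Gamma_{\mathrm{gain}}$, after commuting $w_{\theta}$ through and using $|u|^{2}+|v|^{2}=|u'|^{2}+|v'|^{2}$, is controlled for $\bar{t}$ small by $\langle v\rangle\,e^{\mathfrak{C}\langle v\rangle^{2}s}\,\|w_{\theta}e^{-\mathfrak{C}\langle\cdot\rangle^{2}s}f^{\ell}(s)\|_{\infty}^{2}$; the term $-\tfrac{E^{\ell}\cdot v}{2T_{M}}f$, the commutator of $w_{\theta}$ with $E^{\ell}\cdot\nabla_{v}$, and $\bar\nu^{\ell}\lesssim\langle v\rangle\|f^{\ell}\|_{\infty}$ all have size $\lesssim(\|E^{\ell}\|_{\infty}+\|f^{\ell}\|_{\infty})\langle v\rangle$ and are absorbed once the velocity weight $e^{-\mathfrak{C}\langle v\rangle^{2}t}$ is installed with $\mathfrak{C}$ large. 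The new ingredient is the boundary reflection: the kernel \eqref{eqn: Formula for R} carries an explicit factor $|n(x)\cdot v|$ that cancels the denominator in \eqref{eqn:BC}, so there is no boundary singularity, and using $I_{0}(y)\le e^{|y|}$ and completing the square in the normal and tangential velocity components one obtains
\[
\sup_{v\,:\,n(x)\cdot v<0}\ \int_{n(x)\cdot u>0}\frac{w_{\theta}(v)}{\sqrt{\mu(v)}}\,\frac{R(u\to v;x,t)}{|n(x)\cdot v|}\,\frac{\sqrt{\mu(u)}}{w_{\theta}(u)}\,\{n(x)\cdot u\}\,du\ \le\ C\ <\ \infty
\]
as soon as $\theta<\tfrac{1}{2T_{w}(x)}-\tfrac{1}{4T_{M}}$ at every boundary point, and the temperature/accommodation hypothesis \eqref{eqn: Constrain on T} is precisely what guarantees such a $\theta$ (with $\tilde\theta<\theta<\tfrac1{4T_{M}}$) exists, the Bessel enhancement $I_{0}$ in the normal variable being responsible for the exact form of the threshold. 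Since a trajectory may reflect $\sim\langle v\rangle\bar{t}$ times, the weighted boundary operator produces an amplification at most $e^{C\langle v\rangle\bar{t}}$, which for large $|v|$ is defeated by the $\langle v\rangle^{2}$-decay $e^{-\mathfrak{C}\langle v\rangle^{2}t}$ and for bounded $|v|$ by the finiteness of the number of bounces; the unweighted and lower-order pieces use the normalization $\int R\,dv=1$ and the reciprocity \eqref{eqn: reciprocity}.

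\emph{Weighted $W^{1,p}$ estimate, $\nabla_{v}f$, and uniqueness.} Following \cite{VPB, 2SVPB}, I differentiate the equation in $(x,v)$, multiply by $w_{\tilde\theta}\,\alpha_{f,\epsilon}^{\beta}$ (Definition~\ref{kweight}) and run an $L^{p}$ energy estimate: the invariance of $\alpha_{f,\epsilon}$ under the Vlasov operator keeps the weight inert under transport, while $\alpha_{f,\epsilon}^{\beta}$ absorbs the singularity $\partial_{n}f\sim(n\cdot v)^{-1}$. The forcing terms are handled as in \cite{VPB}; the borderline one, $\nabla_{x}E=\nabla_{x}^{2}\phi$, is controlled via the Morrey-type bound \eqref{phic2pbd} by $\|\alpha^{-\beta}\|_{L^{p/(p-1)}_{v}}\|\alpha^{\beta}\nabla_{x}f\sqrt\mu\|_{L^{p}_{x,v}}$, finite because $\|\alpha^{-\beta p/(p-1)}\|_{L^{1}_{v}}<\infty$ for $\tfrac{\beta p}{p-1}<1$ by the geometric change of variables $v\mapsto(\xbpm,\tbpm)$ as in \eqref{alpha_bounded_intro}; together with the lower bound $\beta>1-\tfrac2p$ needed to dominate the singular $\gamma$-trace, this forces $3<p<6$, $1-\tfrac2p<\beta<\tfrac23$ as in \eqref{eqn: alpha f0}. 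The $\gamma_{-}$-boundary terms come from $\nabla_{x,v}$ of \eqref{eqn: Formula for R}: the $v$-derivative is nonsingular because, $R$ being proportional to $|n\cdot v|$, the boundary condition really reads $f|_{\gamma_{-}}=(\text{smooth in }v)/\sqrt\mu$, while the $x$-derivative produces only $\nabla_{x}n$, $\nabla_{x}T_{w}$ and $v$-polynomial factors from $\nabla_{x}I_{0}$, all bounded since $\partial\Omega\in C^{3}$ and $T_{w}$ is smooth; these are absorbed using $\alpha_{f,\epsilon}\sim|n\cdot v|$ on the boundary. Finally $\|\nabla_{v}f\|_{L^{3}_{x}L^{1+\delta}_{v}}$ closes as in \cite{VPB, 2SVPB}: its equation has the singular forcing $\nabla_{x}f$, dominated by $\|\alpha^{-\beta}\|_{L^{p/(p-1)}_{v}}\|\alpha^{\beta}\nabla_{x}f\|_{L^{p}_{x,v}}<\infty$, and the $v$-derivative of the boundary condition is bounded; an $L^{1+\delta}$-stability estimate of the type \eqref{stability_1+} yields uniqueness and, with the uniform bounds above, lets one pass to the limit in the nonlinear and boundary terms.

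\emph{Main obstacle.} The heart of the argument is the boundary analysis of the Cercignani--Lampis kernel. In the $L^{\infty}$ step this is the completing-the-square estimate of $\tfrac{w_{\theta}(v)}{\sqrt{\mu(v)}}\tfrac{R(u\to v)}{|n\cdot v|}\tfrac{\sqrt{\mu(u)}}{w_{\theta}(u)}$ --- the one place where \eqref{eqn: Constrain on T} is genuinely needed, and where the normal/tangential anisotropy with its Bessel enhancement must be tracked carefully --- together with the observation that the at-most-$e^{C\langle v\rangle\bar{t}}$ amplification from multiple reflections is beaten by the $\langle v\rangle^{2}$-weight. In the $W^{1,p}$ step it is the verification that $\nabla_{x,v}$ of \eqref{eqn: Formula for R} creates no uncontrolled singularity and that the resulting $\gamma_{-}$ energy term is absorbed by $\alpha_{f,\epsilon}\sim|n\cdot v|$. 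Once these are in hand, the rest runs on the now-standard framework of \cite{VPB, 2SVPB}.
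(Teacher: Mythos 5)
Your proposal follows essentially the same route as the actual proof, which this survey does not reproduce: the theorem is the main result of \cite{CKL}, and the argument there is exactly the framework you describe --- a linear iteration solved along characteristics with a stochastic-cycle boundary expansion; a weighted $L^\infty$ estimate whose heart is the finiteness of the $w_\theta$-weighted Cercignani--Lampis boundary operator (completing squares in $u_\perp,u_\parallel$ with $I_0(y)\le e^{|y|}$, which is precisely where the constraint relating $\min_{\partial\Omega}T_w/\max_{\partial\Omega}T_w$ to $r_\perp,r_\parallel$ and the restriction $\theta<1/(4T_M)$ enter); the factor $e^{-\mathfrak{C}\langle v\rangle^2 t}$ to control the quadratic nonlinearity for large data locally in time; the dynamical $\alpha_{f,\epsilon}$-weighted $W^{1,p}$ energy estimate with the $\gamma_+$ trace term; and the $L^3_xL^{1+\delta}_v$ bound on $\nabla_v f$ feeding an $L^{1+\delta}$ stability argument for uniqueness.

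One step of your $L^\infty$ argument is, however, justified by the wrong mechanism. Under diffuse-type (hence also Cercignani--Lampis) reflection the outgoing velocity at each bounce is an integration variable, not determined by billiard dynamics, so there is no ``finiteness of the number of bounces'' for bounded $|v|$, and the comparison of an $e^{C\langle v\rangle\bar t}$ amplification against the $e^{-\mathfrak{C}\langle v\rangle^2 t}$ weight does not close the bounce expansion either. What closes it in \cite{CKL} (as in \cite{Guo10,VPB}, and as reflected in the structure of Lemma \ref{expandtrajl} above) is the measure-theoretic smallness, with respect to the product measure $d\Sigma$ over $\prod_j \mathcal V_j$, of the set of velocity sequences for which $t^{k}>0$ after $k$ bounces: choosing $k$ large (depending on $\bar t$) makes the last term of the expansion absorbable, while the boundedness of the weighted C-L kernel --- the estimate you correctly isolate as the key novelty --- controls the accumulated factors over the finitely many retained cycles. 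With that replacement, your outline matches the published proof.
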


\unhide

\section{On the Vlasov-Poisson-Boltzmann system surrounded by Conductor boundary}

In the second part of the paper, we consider the one-species VPB system surrounded by conductor boundary. More specifically, we consider the system \eqref{Boltzmann_E}, \eqref{Field}, where the electrostatic potential $\phi$ is obtained by
\Be \label{phiC}
-\Delta_x \phi(t,x) = \int_{\mathbb R^3} F(t,x,v) dv, \, x \in \O, \ \  \ 
\phi = 0, \, x \in \p \O.
\Ee
An important benefit in the conductor boundary setting \eqref{phiC} is that $E = - \nabla_x \phi$ enjoys the sign condition \eqref{signEonbdry} from a quantitative Hopf lemma, without the need of an external potential.
\begin{lemma}[Lemma $3.2$ in \cite{BC}]\label{Hopf}
Suppose $ h \ge 0 $, and $h \in L^\infty(\O)$. Let $v$ be the solution of
\Be  \begin{split}
- \Delta v = h   \text{ in } \O,
 \ \ v = 0   \text{ on } \p \O.
\end{split} \Ee
Then for any $ x \in \p \O$,
\Be \label{hopfquan}
\frac{ \p v(x) }{ \p n} \ge c \int_{\O} h(x)  d (x, \p \O)dx,
\Ee
for some $c > 0$ depending only on $\O$. Here $d (x, \p \O)$ is the distance from $x$ to the boundary $\p \O$.
\end{lemma}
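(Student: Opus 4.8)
The statement is a quantitative Hopf boundary point lemma, and the natural route is through the Dirichlet Green's function of $-\Delta$ on $\O$. Since $\O$ is a bounded domain with $C^{1,1}$ boundary (which holds here, $\O$ being $C^{3}$) and $h\in L^\infty(\O)$, $h\ge 0$, the solution $v$ lies in $W^{2,p}(\O)\cap W^{1,p}_0(\O)$ for every $p<\infty$ by Calder\'on--Zygmund theory, hence $v\in C^{1,\alpha}(\bar\O)$ and the normal derivative is classical on $\p\O$; moreover $-\Delta v=h\ge0$ with $v|_{\p\O}=0$ gives $v\ge0$ in $\O$ by the minimum principle. The plan is to write
\[
v(y)=\int_\O G(y,z)\,h(z)\,\dd z ,\qquad y\in\O ,
\]
with $G>0$ the Green's function, and then to differentiate at a fixed $x\in\p\O$ in the normal direction to obtain
\[
\frac{\p v}{\p n}(x)=\int_\O P(x,z)\,h(z)\,\dd z ,\qquad P(x,z):=\frac{\p G}{\p n_x}(x,z)\ \ge\ 0 ,
\]
where $P$ is the Poisson kernel and $\p/\p n$ is taken in the inner-normal sense (so the quantity is nonnegative, in accordance with $v\ge0$ and $v|_{\p\O}=0$); differentiation under the integral is justified by dominated convergence once the kernel bound of the next step is available. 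Everything then reduces to a pointwise lower bound on $P$.

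The key input is the classical lower bound for the Poisson kernel of a bounded $C^{1,1}$ domain: there is $c_\O>0$ with
\[
P(x,z)\ \ge\ c_\O\,\frac{d(z,\p\O)}{|x-z|^{3}} \qquad\text{for all }x\in\p\O,\ z\in\O ,
\]
so that, since $|x-z|\le\mathrm{diam}(\O)$, one gets $P(x,z)\ge c'_\O\,d(z,\p\O)$ with $c'_\O=c_\O/\mathrm{diam}(\O)^{3}$, uniformly in $x\in\p\O$ and $z\in\O$. This estimate is standard (Widman; Gr\"uter--Widman; Zhao), and it can be obtained by a barrier comparison: the fundamental-solution lower bound $G(y,z)\ge\tfrac1{4\pi}\bigl(|y-z|^{-1}-d(z,\p\O)^{-1}\bigr)$ shows $G(\cdot,z)\gtrsim d(z,\p\O)^{-1}$ on the sphere $\p B\!\left(z,\tfrac12 d(z,\p\O)\right)$; then, comparing the positive harmonic function $G(\cdot,z)$ on $\O\setminus\overline{B\!\left(z,\tfrac12 d(z,\p\O)\right)}$ with the harmonic measure of that sphere relative to $\O\setminus\overline{B\!\left(z,\tfrac12 d(z,\p\O)\right)}$, and invoking the qualitative Hopf lemma uniformly over the compact family of admissible configurations (this is where the uniform interior/exterior ball condition of $\p\O$ enters), recovers the displayed power of $d(z,\p\O)$ for the normal derivative at $x$.

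Granting the Poisson-kernel bound, the conclusion is immediate:
\[
\frac{\p v}{\p n}(x)=\int_\O P(x,z)\,h(z)\,\dd z\ \ge\ c'_\O\int_\O h(z)\,d(z,\p\O)\,\dd z ,
\]
which is \eqref{hopfquan} with $c=c'_\O$; equivalently, for the outward normal, $\p_n v(x)\le -c'_\O\int_\O h\,d(\cdot,\p\O)\,\dd z$, which is the form that yields the sign condition \eqref{signEonbdry} for $E=-\nabla_x\phi$ once $h=\int_{\R^3}F\,\dd v\ge0$. The one genuine difficulty is exactly the uniform lower bound on $P$: one must defeat the degeneracy of the Poisson kernel as $z\to\p\O$, which is tracked by the weight $d(z,\p\O)$ and dictated by the $C^{1,1}$ geometry of $\O$, and this is precisely what the barrier comparison above is designed to handle; the complete argument is carried out in \cite{BC}.
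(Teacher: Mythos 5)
Your argument is correct and is essentially the approach of the cited source: the paper itself gives no proof, deferring to Brezis--Cabr\'e, whose Lemma 3.2 rests on the Green's-function lower bound $G(y,z)\gtrsim d(y,\p\O)\,d(z,\p\O)$, and your Poisson-kernel estimate $P(x,z)\gtrsim d(z,\p\O)$ is exactly its boundary (inner-normal) derivative form, so the conclusion follows the same way. One small slip: dominated convergence for differentiating under the integral needs the matching \emph{upper} bound $|\nabla_x G(x,z)|\lesssim d(z,\p\O)|x-z|^{-3}$ (Gr\"uter--Widman), not the lower bound you quote --- or, more simply, since $v\in C^{1}(\bar\O)$ and all quantities are nonnegative, Fatou applied to the difference quotients of $v$ along the inner normal already yields \eqref{hopfquan} from the lower bound alone.
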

\hide
\begin{proof}
See .

\end{proof}\unhide

Our goal is to prove a local existence and regularity theorem for the system \eqref{Boltzmann_E}, \eqref{Field}, \eqref{diffuse_BC}, \eqref{phiC}. Let's first define our distance function $\tilde \alpha$.

%

Let $d(x,\partial \Omega) := \inf_{y \in \partial \Omega} \| x - y \| $. For any $\delta > 0$, let $ \Omega ^\delta : = \{ x \in \Omega : d(x, \partial \Omega ) < \delta \}$. For $\delta \ll 1$ is small enough, we have for any $x \in \Omega ^\delta$ there exists a unique $\bar x \in \partial \Omega$ such that $d(x,\bar x ) = d(x, \partial \Omega)$ (cf. (2.44) in \cite{VPBEP}).

\begin{definition}
First we define for all $(x, v ) \in  \Omega ^{\delta} \times \mathbb R^3 $,
\[
\beta(t,x,v) = \bigg[ |v \cdot \nabla \xi (x)| ^2 + \xi (x)^2 - 2 (v \cdot \nabla^2 \xi(x) \cdot v ) \xi(x) +2(\nabla \phi(t,\overline x ) \cdot \nabla \xi (\overline x ) )\xi(x) \bigg]^{1/2}.
\]
For any $\epsilon >0$, let $\chi_\epsilon: [0,\infty) \to [0,\infty) $ be a smooth function satisfying $  \chi_\epsilon(x) =x$ for $0 \le x \le \frac{\epsilon}{4}$, $\chi_\epsilon(x) = C_\epsilon$ for  $x \ge \frac{\epsilon}{2}$,  $\chi_\epsilon(x)$ is increasing for $ \frac{\epsilon}{4} < x < \frac{\epsilon}{2}$, and $ \chi_\epsilon'(x) \le 1$. Let $\delta': = \min \{| \xi (x)| : x\in \Omega, d(x, \partial \Omega) = \delta \} $, then we define our weight function to be:

\begin{equation} \label{alphadef}
\tilde \alpha(t,x,v) : = 
\begin{cases}
 (\chi_{\delta'}  ( \beta(t,x,v) ) )^{} & x \in  \Omega^\delta, \\
 C_{\delta'} ^{} & x \in \Omega \setminus  \Omega^\delta.
\end{cases} \end{equation}
\end{definition}

\begin{theorem}[Weighted $W^{1,\infty}$ estimate for the VPB surrounded by conductor]  \label{WlinftyVPBthm} 
Assume $F_0 = \sqrt \mu f_0$ satisfies
\Be \label{VPBf0assumption}
\|  w_\vartheta \tilde \alpha \nabla_{x,v} f_0 \|_{L^\infty(\bar \O \times \mathbb R^3)} + \| w_\vartheta f_0 \|_{L^\infty(\bar \O \times \mathbb R^3)} + \| w_\vartheta \nabla_v f_0 \|_{L^3(\bar \O \times \mathbb R^3)} < \infty,
\Ee
for some $0< \vartheta < \frac{1}{4}$.Then there exists a unique solution $F(t,x,v) = \sqrt \mu f(t,x,v) $ to (\ref{Boltzmann_E}), \eqref{Field}, \eqref{diffuse_BC}, (\ref{phiC}) for $t \in [0,T]$ with $0 < T \ll 1$, such that for some $0< \vartheta' < \vartheta $, $\varpi \gg 1$,
\begin{equation}  \label{linfinitybddsolution}
\sup_{0\le t \le T} \| w_{\vartheta'} f(t) \|_{L^\infty(\bar \O \times \mathbb R^3)} < \infty,
\end{equation} 
\begin{equation}\label{weightedC1bddsolution}
\sup_{0 \le t \le T}  \| w_{\vartheta'} e^{-\varpi  \langle v \rangle t } \tilde \alpha  \nabla_{x,v}  f^{} (t,x,v) \|_{L^\infty(\bar \O \times \mathbb R^3)} < \infty, 
\end{equation}
\begin{equation} \label{L3L1plusbddsolution}
 \sup_{0 \le t \le T}\| e^{-\varpi \langle v \rangle t } \nabla_v f(t) \|_{L^3_x(\Omega)L_v^{1+\delta}(\mathbb R^3 ) } < \infty \text{ for } 0< \delta \ll 1.
\end{equation}
\end{theorem}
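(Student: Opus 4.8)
The plan is to run the scheme of \cite{VPBEP}, the only genuinely new input being that the sign condition \eqref{signEonbdry} is now \emph{produced} by the conductor boundary rather than assumed. For the potential \eqref{phiC} one has $-\Delta_x\phi=\int_{\R^3}F\,\d v\ge 0$ with $\phi|_{\p\O}=0$, so $\phi\ge 0$ in $\O$ by the maximum principle, and Lemma~\ref{Hopf} applied with $h=\int_{\R^3}F\,\d v$ yields $E\cdot n=-\p_n\phi\ge C_E$ on $\p\O$ with $C_E:=c\,\inf_{t\in[0,T]}\int_{\O}\big(\int_{\R^3}F(t,y,v)\,\d v\big)\,d(y,\p\O)\,\d y$. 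Since $F=\sqrt\mu f\ge 0$ is maintained by the scheme below (given $f_0\ge 0$, $f_0\not\equiv 0$, which we assume), a short-time continuity argument keeps $C_E$ bounded below uniformly on $[0,T]$. With such a $C_E$ in hand, one checks that the truncated weight $\tilde\alpha$ of \eqref{alphadef} --- globally defined and bounded thanks to $\chi_{\delta'}$, and equal to $\beta(t,x,v)$ on $\O^{\delta}$, where the sign term $2(\nabla\phi(t,\overline x)\cdot\nabla\xi(\overline x))\xi(x)>0$ reinforces $\tilde\alpha$ near $\p\O$ --- satisfies the hypotheses of the velocity lemma (Lemma~\ref{velocitylemma}) and of the non-local to local estimate (Lemma~\ref{keylemma}) with the external field there replaced by $\nabla_x\phi$, provided $\|\nabla_x\phi\|_{C^1_{t,x}(\R_+\times\bar\O)}$ stays bounded; that last bound is supplied during the iteration.

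Next I set up an iteration. Given $F^{(n)}\ge 0$, define $\phi^{(n)}$ by \eqref{phiC} with source $\int_{\R^3}F^{(n)}\,\d v$, and let $F^{(n+1)}=\sqrt\mu f^{(n+1)}$ solve the linear equation $\p_t F^{(n+1)}+v\cdot\nabla_x F^{(n+1)}-\nabla_x\phi^{(n)}\cdot\nabla_v F^{(n+1)}+\nu[F^{(n)}]\,F^{(n+1)}=Q_{\mathrm{gain}}(F^{(n)},F^{(n+1)})$ (with $\nu[F^{(n)}]=\int_{\R^3}\int_{\S^2}|(v-u)\cdot\omega|F^{(n)}(u)\,\d\omega\,\d u$), together with the diffuse condition \eqref{diffuse_BC} and datum $F_0$; writing the loss as a damping, the solution map is monotone, so $F^{(n+1)}\ge 0$, and the linearized collision still satisfies $\int_{\R^3}[Q_{\mathrm{gain}}(G,H)-\nu[G]H]\,\d v=0$. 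At each step, elliptic regularity gives $\|\nabla_x\phi^{(n)}\|_{L^\infty}\lesssim\|\int_{\R^3}F^{(n)}\,\d v\|_{L^\infty}\lesssim\|w_\vartheta f^{(n)}\|_\infty$, while for $\|\nabla^2_x\phi^{(n)}\|_{L^\infty}$ and $\|\p_t\nabla_x\phi^{(n)}\|_{L^\infty}$ one uses Schauder/singular-integral estimates: the collision invariant $\int_{\R^3}Q\,\d v=0$ reduces $\p_t\!\int_{\R^3}F^{(n)}\,\d v$ to $-\nabla_x\!\cdot\!\int_{\R^3}v\sqrt\mu f^{(n)}\,\d v$, and both $\int_{\R^3}\sqrt\mu\nabla_x f^{(n)}\,\d v$ and $\int_{\R^3}v\sqrt\mu\nabla_x f^{(n)}\,\d v$ lie in every $L^p_x$, $p<\infty$, because $|\nabla_x f^{(n)}|\lesssim\tilde\alpha^{-1}(\tilde\alpha|\nabla_x f^{(n)}|)$ and $\tilde\alpha^{-1}\sqrt\mu\in L^1_v$ up to a logarithmic loss in $x$ that is harmless in $L^p_x$ --- using the a priori $\tilde\alpha$-weighted bound for $\nabla_x f^{(n)}$ from the previous step. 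Hence $\|\nabla_x\phi^{(n)}\|_{C^1_{t,x}}\le\Lambda$ and $C_E^{(n)}\ge C_E$ uniformly in $n$ on $[0,T]$, so the conclusions of Lemmas~\ref{velocitylemma}--\ref{keylemma} hold with constants independent of $n$.

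With the field bounds frozen, the uniform estimates follow as in \cite{VPBEP}. For \eqref{linfinitybddsolution}: the $L^\infty$-along-characteristics argument with stochastic (diffuse) cycles, as in \cite{VPB}, using the Vlasov flow bounds coming from $\|\nabla_x\phi^{(n)}\|_{L^\infty}$, gives $\sup_{[0,T]}\|w_\vartheta e^{-\mathfrak C\langle v\rangle^2 t}f^{(n+1)}(t)\|_\infty\lesssim\|w_\vartheta f_0\|_\infty$, and shrinking $T$ so that $\mathfrak C\,T<\vartheta-\vartheta'$ absorbs the velocity growth and yields \eqref{linfinitybddsolution} in the $w_{\vartheta'}$-norm. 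For \eqref{weightedC1bddsolution}: differentiating the equation, the term $[\p_t+v\cdot\nabla_x-\nabla_x\phi^{(n)}\cdot\nabla_v]\tilde\alpha$ is controlled by Lemma~\ref{velocitylemma} up to a factor $e^{C\langle v\rangle t}$, absorbed by the weight $e^{-\varpi\langle v\rangle t}$ once $\varpi\gg1$; the commutators $[\nabla_{x,v},\,v\cdot\nabla_x-\nabla_x\phi^{(n)}\cdot\nabla_v]$ contribute only lower-order terms, and $\nabla_{x,v}Q$ contributes the non-local kernel which, after one Duhamel iteration, is absorbed by Lemma~\ref{keylemma} with its parameter chosen in $(1,5/3)$; the differentiated diffuse condition \eqref{diffuse_BC} is harmless in its tangential and temporal components, while its normal component carries the singular factor $1/(n(x)\cdot v)$ that is cancelled by $\tilde\alpha|_{\gamma_-}\sim|n(x)\cdot v|$. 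For \eqref{L3L1plusbddsolution}: the $\nabla_v$-derivative of \eqref{diffuse_BC} has no boundary singularity, and the forcing $\nabla_x f^{(n+1)}$ in the $\nabla_v f^{(n+1)}$ equation is estimated, as in \cite{VPB,VPBEP}, by a Hölder split in $v$ against $\|\tilde\alpha^{-1}\|_{L^{1+\delta}_v}$ times $\|\tilde\alpha\nabla_x f^{(n+1)}\|_{L^\infty}$.

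Finally, the iterates being uniformly bounded in the above norms on $[0,T]$ for $T\ll1$, the differences $F^{(n+1)}-F^{(n)}$ satisfy a linear equation with vanishing data and forcing bounded by $\|F^{(n)}-F^{(n-1)}\|$ times a factor small for $T\ll1$ and $\varpi\gg1$ (the field difference is handled by elliptic regularity, and the boundary term uses again that the $v$-derivative of the diffuse condition is bounded); hence $\{f^{(n)}\}$ is Cauchy in, say, $L^\infty_tL^2_{x,v}$, and its limit $f$ is a solution of \eqref{Boltzmann_E}, \eqref{Field}, \eqref{diffuse_BC}, \eqref{phiC} with the asserted bounds by lower semicontinuity, $F=\sqrt\mu f\ge0$ since each $F^{(n)}\ge0$. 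Uniqueness follows from the same contraction applied to two solutions. The main obstacle is the self-consistent control of the sign constant $C_E$ intertwined with the $t$-dependence of $\tilde\alpha$ through $\nabla_x\phi(t,\overline x)$: this forces a bound on $\p_t\nabla_x\phi$ and hence a Schauder-type bootstrap tied precisely to the weighted $W^{1,\infty}$ estimate one is trying to close; once that bootstrap is arranged, the heart of the matter is the grazing-set absorption provided by Lemma~\ref{keylemma}, exactly as in \cite{VPBEP}.
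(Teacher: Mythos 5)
Your proposal is correct and follows essentially the same route as the paper: the quantitative Hopf lemma combined with a short-time conservation argument (integrating the equation against the distance-to-boundary weight) yields the uniform sign condition, after which the velocity lemma and the nonlocal-to-local estimate of \cite{VPBEP} close the $\tilde\alpha$-weighted $W^{1,\infty}$ estimate along the stochastic diffuse cycles and the $L^3_xL^{1+\delta}_v$ bound for $\nabla_v f$, with $\|\nabla^2_x\phi\|_\infty$ recovered from the weighted gradient via Schauder--Morrey and the $L^p_x$ integrability of $\int \tilde\alpha^{-\beta}\sqrt\mu\,dv$. Your extra details (the explicit linear iteration and the bound on $\partial_t\nabla_x\phi$ via the continuity equation) are points the paper delegates to \cite{CKL,VPBEP} rather than a genuinely different method.
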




The corresponding equation for $f = \frac{F}{\sqrt \mu } $ is
\begin{equation} \label{VPBsq1}
(\partial_t + v \cdot \nabla_x - \nabla \phi \cdot \nabla_v  + \frac{v}{2} \cdot \nabla \phi + \nu( \sqrt \mu f) ) f^{} = \Gamma_{\text{gain}} (f,f),
\end{equation}
\begin{equation} \label{VPBsq3}
-\Delta_x \phi(t,x) = \int_{\mathbb R^3 } \sqrt \mu f dv, \,\,   \phi = 0 \text{  on } \partial \Omega,
\end{equation}
\Be \label{fbdry}
f^{}(t,x,v) = c_\mu \sqrt{\mu(v)} \int_{n\cdot u >0 } f(t,x,v) \sqrt{\mu(u)} (n(x) \cdot u ) du.
\Ee
Here $
 \nu (\sqrt \mu f )(v) : = \int_{\mathbb R^3 } \int_{\mathbb S^2} |v - u|^{\kappa} q_0 (\frac{ v -u}{ |v -u| } \cdot w ) \sqrt{\mu(u) } f(u) d\omega du$, and $
 \Gamma_{\text{gain}} (f_1,f_2) (v): = \int_{\mathbb R^3 } \int_{\mathbb S^2} |v - u|^{\kappa} q_0 (\frac{ v -u}{ |v -u| } \cdot w ) \sqrt{\mu(u) } f_1(u')f_2(v') d\omega du$.

Let $\partial \in \{ \nabla_x, \nabla_v \}$. Let $E = - \nabla_x \phi $. Denote 
\Be \label{nuvarpi}
\nu_{\varpi} = \nu(\sqrt \mu f )+ \frac{v}{2} \cdot E  + \varpi \langle v \rangle  + t \varpi \frac{v}{\langle v \rangle }\cdot E    -  {\tilde \alpha}^{-1} ( \partial_t {\tilde \alpha} + v \cdot \nabla_x {\tilde \alpha} + E \cdot \nabla_v {\tilde \alpha} ).
\Ee
Then by direct computation we get 
 \begin{equation} \label{seqforc1fixedpotential} \begin{split}
 \bigg \{ & \partial_t + v\cdot \nabla_x  + E \cdot \nabla_v + \nu_{\varpi} \bigg\} ( e^{-\varpi \langle v \rangle t } {\tilde \alpha} \partial f^{})
\\  = & e^{-\varpi \langle v \rangle t }  {\tilde \alpha} \left(  \partial \Gamma_{gain} (f,f) - \partial v \cdot \nabla_x f^{} - \partial E \cdot \nabla_v f^{} - \partial ( \frac{v}{2} \cdot E ) f^{} - \partial (\nu (\sqrt \mu f ) ) f^{}  \right)
\\  := & \mathcal{N}(t,x,v).
\end{split} \end{equation}

In order to deal with the diffuse boundary condition \eqref{diffuse_BC}, we define the stochastic (diffuse) cycles as $(t^0,x^0,v^0) = (t,x,v)$,
\begin{equation} \label{diffusecycles} \begin{split}
& t^1 = t - \tb(t,x,v), \, x^1 = \xb(t,x,v) = X(t - \tb(t,x,v);t,x,v), 
\\ & v_b^0 = V(t - \tb(t,x,v);t,x,v) = \vb(t,x,v),
\end{split} \end{equation}
 and $v^1 \in \mathbb R^3$ with $n(x^1) \cdot v^1 > 0$. 
For $l\ge1$, define
\[ \begin{split}
& t^{l+1} = t^l - \tb(t^l,x^l,v^l), x^{l + 1 } = \xb(t^l,x^l,v^l), 
\\ & v_b^l = \vb(t^l,x^l,v^l), 
\end{split} \]
and $v^{l+1} \in \mathbb R^3 \text{ with } n(x^{l+1}) \cdot v^{l+1} > 0$.
Also, define 
\[
X^l(s) = X(s;t^l,x^l,v^l), \, V^l(s) = V(s;t^l,x^l,v^l),
\]
 so $X(s) = X^0(s), V(s) = V^0(s)$. We have the following lemma.

\begin{lemma}[Lemma $12$ in \cite{VPBEP}] \label{expandtrajl}


If $t^1 < 0$, then
\Be \label{C1trajectoryt1less0}
\begin{split}
&  e^{-\varpi \langle v \rangle t } \tilde \alpha | \partial  f (t,x,v) |
\\ &  \lesssim \tilde \alpha(0,X^0(0), V^0(0) ) \partial f (0, X^0(0) , V^0(0) ) + \int_0^t \mathcal N(s,X^0(s), V^0(s) ) ds.
\end{split} \Ee

If $t^1 > 0$, then
\begin{equation} \label{C1trajectory}
\begin{split}
& e^{-\varpi \langle v \rangle t } \tilde \alpha | \partial  f (t,x,v) |
\\ \lesssim & e^{-\frac{\vartheta}{2}  |\vb^0| ^2} P(\| w_\vartheta f_0 \|_\infty) +  \int_{t^1}^t \mathcal N(s, X^0(s), V^0(s) ) ds
\\ & +  \sqrt {\mu (\vb^0) } \langle \vb^0 \rangle^2 \int_{\prod_{j=1}^{l-1} \mathcal V_j} \sum_{i=1}^{l-1} \textbf{1}_{\{t^{i+1} < 0 < t^i \}}  |\tilde \alpha  \partial f  (0,X^{i}(0), V^{i}(0)) |    \, d \Sigma_{i}^{l-1}
\\ & +  \sqrt {\mu (\vb^0) } \langle \vb^0 \rangle^2 \int_{\prod_{j=1}^{l-1} \mathcal V_j} \sum_{i=1}^{l-1} \textbf{1}_{\{t^{i+1} < 0 < t^i \}}  \int_0^{t^i} \mathcal N(s, X^i(s), V^i(s) ) ds  \, d \Sigma_{i}^{l-1}
\\ & +  \sqrt {\mu (\vb^0) } \langle \vb^0 \rangle^2 \int_{\prod_{j=1}^{l-1} \mathcal V_j} \sum_{i=1}^{l-1} \textbf{1}_{\{t^{i+1} > 0 \}}  \int_{t^{i+1}}^{t^i} \mathcal N(s, X^i(s), V^i(s) ) ds  \, d \Sigma_{i}^{l-1}
\\ & +  \sqrt {\mu (\vb^0) } \langle \vb^0 \rangle^2 \int_{\prod_{j=1}^{l-1} \mathcal V_j} \sum_{i=2}^{l-1} \textbf{1}_{\{t^{i} > 0 \}}   e^{-\frac{\vartheta}{2}  |\vb^{i-1}| ^2} P(\| w_\vartheta f_0 \|_\infty)  \, d \Sigma_{i-1}^{l-1}
\\ & +  \sqrt {\mu (\vb^0) } \langle \vb^0 \rangle^2  \int_{\prod_{j=1}^{l-1} \mathcal V_j}  \textbf{1}_{\{t^{l} > 0 \}}  e^{-\varpi \langle \vb^{l-1} \rangle t^l } \tilde \alpha (t^l,x^l, \vb^{l-1}) | \partial  f (t^l,x^l,\vb^{l-1}) | d \Sigma_{l-1}^{l-1},
\end{split} \end{equation}
where $\mathcal V_j = \{ v^j \in \mathbb R^3: n(x^j ) \cdot v^j > 0 \}$,
and 
\[ \begin{split}
d \Sigma_i^{l-1} = & \{\prod_{j=i+1}^{l-1}  \mu(v^j) c_\mu |n(x^j ) \cdot v^j | dv^j \} 
\{ e^{\varpi \langle v^i \rangle t^i } \mu^{1/4}(v^i) \langle v^i \rangle d v^i \}
\\ & \{\prod_{j=1}^{i-1} \sqrt{\mu(\vb^j ) } \langle \vb^j \rangle \mu^{1/4}(v^j ) \langle v^j \rangle e^{\varpi \langle v^j \rangle t^j } d v^j\},
\end{split} \]
where $c_\mu$ is the constant that $ \int_{\mathbb R^3 }  \mu(v^j) c_\mu |n(x^j ) \cdot v^j | dv^j = 1$.
\end{lemma}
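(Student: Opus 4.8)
The plan is the usual Duhamel-along-characteristics unfolding of the diffuse boundary condition, applied to the weighted quantity $G(t,x,v):=e^{-\varpi\langle v\rangle t}\tilde\alpha(t,x,v)\,\partial f(t,x,v)$, which solves the transport equation \eqref{seqforc1fixedpotential}. First I would fix $\varpi\gg1$ so that the damping coefficient $\nu_\varpi$ in \eqref{nuvarpi} is nonnegative: the term $\varpi\langle v\rangle$ absorbs $\nu(\sqrt\mu f)+\tfrac v2\cdot E+t\varpi\tfrac{v}{\langle v\rangle}\cdot E$ as well as $\tilde\alpha^{-1}(\partial_t+v\cdot\nabla_x+E\cdot\nabla_v)\tilde\alpha$, the latter being $\lesssim\langle v\rangle+1$ by the velocity lemma (Lemma \ref{velocitylemma}, in the form adapted to the truncated weight \eqref{alphadef}), which is available because $E=-\nabla_x\phi$ satisfies the sign condition \eqref{signEonbdry} on $\partial\Omega$ — here automatically, via the quantitative Hopf lemma (Lemma \ref{Hopf}) applied to \eqref{phiC} with $F=\sqrt\mu f\ge0$ — and $\|E\|_{C^1_{t,x}}$ is under control on $[0,T]$. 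Then the integrating factor $e^{-\int_s^t\nu_\varpi}$ is $\le1$ and is discarded; since $T\ll1$ and $E$ is bounded one also has $\langle V(\tau;t,x,v)\rangle\sim\langle v\rangle$ along trajectories. If $t^1<0$ the backward characteristic stays in $\Omega$ on $(0,t)$, so integrating \eqref{seqforc1fixedpotential} from $0$ to $t$ along $(X^0(s),V^0(s))$ gives \eqref{C1trajectoryt1less0} at once.

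If $t^1>0$, integrating from $t^1$ to $t$ gives $|G(t,x,v)|\lesssim|G(t^1,x^1,\vb^0)|+\int_{t^1}^t|\mathcal N(s,X^0(s),V^0(s))|\,ds$ with $(x^1,\vb^0)\in\gamma_-$. Differentiating the diffuse condition \eqref{fbdry} (for $\partial\in\{\nabla_x,\nabla_v\}$) yields, with no new boundary singularity, a bound $\tilde\alpha(t^1,x^1,\vb^0)\,|\partial f(t^1,x^1,\vb^0)|\lesssim\sqrt{\mu(\vb^0)}\langle\vb^0\rangle^2\big(\int_{n(x^1)\cdot v^1>0}|n(x^1)\cdot v^1|\sqrt{\mu(v^1)}\,|\partial f(t^1,x^1,v^1)|\,dv^1+P(\|w_\vartheta f_0\|_\infty)\big)$; the integral term (present for $\partial=\nabla_x$, absent for $\partial=\nabla_v$, whose $v$-dependence sits only in the prefactor $\sqrt{\mu(v)}$) is harmless because $|n(x^1)\cdot v^1|\sim\tilde\alpha(t^1,x^1,v^1)$ at the grazing set and the geometric ($x$-)derivatives of the half-space integral contribute only bounded curvature factors, while the $P(\|w_\vartheta f_0\|_\infty)$ piece comes from bounding the purely-$f$ part of the differentiated boundary condition by the already-established $L^\infty$ estimate. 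Now iterate: for each incident $v^1$, trace $X^1(s)=X(s;t^1,x^1,v^1)$ backward and apply the same Duhamel step — if its exit time crosses $0$ ($t^2<0$) the chain terminates, producing the initial-data term $\tilde\alpha\partial f(0,X^1(0),V^1(0))$ and the forcing $\int_0^{t^1}\mathcal N(s,X^1(s),V^1(s))\,ds$; if $t^2>0$ apply the differentiated boundary condition again and integrate in $v^2$; continue up to level $l-1$, leaving the chains with $t^l>0$ as the remainder carrying $\tilde\alpha\partial f(t^l,x^l,\vb^{l-1})$. Classifying the accumulated terms by the first level $i$ at which the chain returns to $\{t=0\}$ (hence the indicators $\mathbf 1_{\{t^{i+1}<0<t^i\}}$, $\mathbf 1_{\{t^{i+1}>0\}}$, $\mathbf 1_{\{t^i>0\}}$) and collecting, at each boundary application, the constant $c_\mu$, the split Gaussian weights $\mu^{1/4}$ and $\sqrt{\mu(\vb^j)}$, the $\langle\cdot\rangle$ powers produced by $\nabla_v\sqrt\mu$ and by $|n\cdot v|$, and the $e^{\pm\varpi\langle\cdot\rangle t^j}$ factors shuffled in and out to pass between $G$ and $\tilde\alpha\partial f$, reassembles into exactly \eqref{C1trajectory} with the stated measure $d\Sigma_i^{l-1}$ (the factors $\mu(v^j)c_\mu|n(x^j)\cdot v^j|$ attached to the post-termination indices $j>i$ integrating to $1$).

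The main obstacle is the boundary step: showing that differentiating \eqref{fbdry} does not worsen the $\tilde\alpha^{-1}$ singularity — i.e. that $\tilde\alpha|_{\gamma_-}\sim|n(x)\cdot v|$ for the truncated weight \eqref{alphadef}, that the $x$-derivative of the incident half-space integral is controlled by the $C^2$ geometry of $\partial\Omega$, and that the whole iterated expansion reassembles with precisely the right weights. Apart from inserting the field $E=-\nabla_x\phi$ into $\nu_\varpi$ and $\mathcal N$ and rechecking that Lemma \ref{velocitylemma} still applies (which is guaranteed by the Hopf-lemma sign condition), this is the argument of Lemma $12$ in \cite{VPBEP}, which we follow.
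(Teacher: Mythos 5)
The paper itself gives no proof of this statement --- it is quoted as Lemma 12 of \cite{VPBEP} --- so the only comparison available is with that proof, and your outline does follow the same skeleton: Duhamel for $e^{-\varpi\langle v\rangle t}\tilde\alpha\,\partial f$ after making $\nu_\varpi\ge 0$ (velocity lemma plus the Hopf sign condition for $E=-\nabla_x\phi$), then unfolding the diffuse condition through the stochastic cycles $(t^i,x^i,v^i)$ and classifying each chain by the first index at which $t^{i+1}<0$. The $t^1<0$ case and the general bookkeeping leading to $d\Sigma_i^{l-1}$ are described correctly at the level of strategy.

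There is, however, a concrete gap at the heart of the boundary step. The diffuse condition \eqref{fbdry} holds only for $x\in\partial\Omega$ with $n(x)\cdot v<0$, so differentiating it can only produce the $v$-, $t$- and \emph{tangential} $x$-derivatives of $f$ at $(t^1,x^1,\vb^0)$. Your claimed bound $\tilde\alpha\,|\partial f(t^1,x^1,\vb^0)|\lesssim\sqrt{\mu(\vb^0)}\langle\vb^0\rangle^2\big(\int_{n\cdot v^1>0}|n\cdot v^1|\sqrt{\mu(v^1)}|\partial f(t^1,x^1,v^1)|\,dv^1+P(\|w_\vartheta f_0\|_\infty)\big)$ therefore does not follow, as stated, for the normal component of $\nabla_x f$: that component has to be recovered from the equation \eqref{VPBsq1} itself, $\partial_n f=\frac{1}{n\cdot v}\big[-\partial_t f-(v\cdot\tau)\partial_\tau f+\nabla\phi\cdot\nabla_v f-\dots\big]$, and it is precisely here that the weight matters, since $\tilde\alpha\sim|n(x)\cdot v|$ on $\gamma_-$ cancels the $1/(n\cdot v)$. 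The $\partial_t f$ generated in this step is in turn controlled by differentiating the boundary condition in time and converting $\partial_t f$ at outgoing velocities back through the equation into $\langle v^1\rangle|\nabla_x f(t^1,x^1,v^1)|+\dots$; this temporal/normal bookkeeping is where the factors $\langle v^i\rangle$, $\mu^{1/4}(v^i)$, $e^{\varpi\langle v^i\rangle t^i}$ in $d\Sigma_i^{l-1}$ and the prefactor $\langle\vb^0\rangle^2$ actually originate, not merely the curvature terms and exponential reshuffling you invoke. So attributing the absence of a boundary singularity entirely to ``differentiating \eqref{fbdry}'' is correct only for $\nabla_v$ and the tangential part of $\nabla_x$; supplying the normal-derivative argument (as in the proof of Lemma 12 of \cite{VPBEP}) is the missing piece that makes the iteration close with the stated measure.
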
 

The following lemma is necessary for us to establish Theorem \ref{WlinftyVPBthm}.

\begin{lemma}
If $(F, \phi)$ solves (\ref{phiC}), write $f = \frac{F}{\sqrt \mu}$, then
\begin{equation} \label{linfinitybdofpotential}
\| \phi_F(t) \|_{C^{1,1-\delta} (\O) } \lesssim_{\delta,\Omega} \| w_\vartheta f(t) \|_{L^\infty(\bar \O \times \mathbb R^3)}, \text{ for any } 0< \delta <1,
\end{equation}
and
\begin{equation} \label{c2bdofpotential}
\| \nabla ^2 \phi_F(t) \|_{L^\infty(\O)} \lesssim \| w_\vartheta f(t) \|_{L^\infty(\bar \O \times \mathbb R^3)} + \| e^{-\varpi \langle v \rangle t } \tilde \alpha \nabla_{x} f (t) \|_{L^\infty(\bar \O \times \mathbb R^3)}.
\end{equation}
\end{lemma}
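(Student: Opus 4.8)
The plan is to reduce both estimates to elliptic regularity for the Dirichlet problem \eqref{phiC}, rewritten as $-\Delta_x\phi_F=\rho$ in $\Omega$ with $\phi_F|_{\partial\Omega}=0$, where $\rho(t,x):=\int_{\mathbb R^3}\sqrt{\mu(v)}\,f(t,x,v)\,dv$. For \eqref{linfinitybdofpotential} I would simply note that $|f(t,x,v)|\le\|w_\vartheta f(t)\|_\infty e^{-\vartheta|v|^2}$ together with $\int_{\mathbb R^3}\sqrt{\mu(v)}\,e^{-\vartheta|v|^2}\,dv<\infty$ (valid since $\vartheta<\tfrac14$) gives $\|\rho(t)\|_{L^q(\Omega)}\lesssim_{q,\Omega}\|w_\vartheta f(t)\|_\infty$ for every finite $q$; then the Calder\'on--Zygmund estimate $\|\phi_F(t)\|_{W^{2,q}(\Omega)}\lesssim_{q,\Omega}\|\rho(t)\|_{L^q(\Omega)}$ on the $C^3$ domain $\Omega$, followed by Morrey's embedding $W^{2,q}(\Omega)\hookrightarrow C^{1,1-3/q}(\bar\Omega)$ for $q>3$ with the choice $q=3/\delta$, yields \eqref{linfinitybdofpotential}. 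This part is routine.

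For \eqref{c2bdofpotential} I would follow the borderline-Schauder route of \eqref{phic2pbd}. The interior/boundary Schauder estimate gives $\|\nabla_x^2\phi_F(t)\|_{L^\infty}\le\|\phi_F(t)\|_{C^{2,\sigma}(\bar\Omega)}\lesssim_{\sigma,\Omega}\|\rho(t)\|_{C^{0,\sigma}(\bar\Omega)}$ for any $\sigma\in(0,1)$, and Morrey gives $\|\rho(t)\|_{C^{0,\sigma}(\bar\Omega)}\lesssim\|\rho(t)\|_{W^{1,p}(\Omega)}\lesssim\|\rho(t)\|_{L^p}+\|\nabla_x\rho(t)\|_{L^p}$ with $p=3/(1-\sigma)>3$. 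The first term is handled by the previous step, so the task reduces to bounding $\|\nabla_x\rho(t)\|_{L^p_x}$ for some $p>3$, where $\nabla_x\rho(t,x)=\int_{\mathbb R^3}\sqrt{\mu(v)}\,\nabla_x f(t,x,v)\,dv$ (differentiation under the integral being legitimate since the weighted bound on $\nabla_x f$ makes $\tilde\alpha$ bounded below on compact subsets of $\Omega$, so $\rho(t)\in W^{1,p}_{loc}(\Omega)$ with this weak gradient). Writing $|\nabla_x f|=\tilde\alpha^{-1}|\tilde\alpha\nabla_x f|\le\tilde\alpha^{-1}e^{\varpi\langle v\rangle t}\|e^{-\varpi\langle v\rangle t}\tilde\alpha\nabla_x f(t)\|_\infty$, everything comes down to the velocity-integral bound
\[
\sup_{x\in\Omega}\ \int_{\mathbb R^3}\sqrt{\mu(v)}\,e^{\varpi\langle v\rangle t}\,\tilde\alpha(t,x,v)^{-1}\,dv\ \lesssim_{t,\varpi,\Omega}\ 1+\big|\log d(x,\partial\Omega)\big| ;
\]
granting this, $\int_\Omega\big(1+|\log d(x,\partial\Omega)|\big)^p\,dx<\infty$ for every finite $p$ gives $\nabla_x\rho(t)\in L^p_x$ with $\|\nabla_x\rho(t)\|_{L^p_x}\lesssim_{p,t,\varpi,\Omega}\|e^{-\varpi\langle v\rangle t}\tilde\alpha\nabla_x f(t)\|_\infty$, hence \eqref{c2bdofpotential}.

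To prove the velocity-integral bound I would use the conductor geometry. Since $F\ge0$ and $\int_\Omega\int_{\mathbb R^3}F\,dv\,dx>0$, Lemma \ref{Hopf} applied to \eqref{phiC} produces the sign condition \eqref{signEonbdry} with $C_E\gtrsim\int_\Omega\big(\int_{\mathbb R^3}F\,dv\big)\,d(x,\partial\Omega)\,dx>0$, whence $\nabla\phi_F(t,\bar x)\cdot\nabla\xi(\bar x)=-|\nabla\xi(\bar x)|\,E(t,\bar x)\cdot n(\bar x)\le-C_E|\nabla\xi(\bar x)|$; combined with $\xi(x)<0$ in $\Omega$ and $|\nabla\xi|$ bounded below near $\partial\Omega$, the last term defining $\beta$ obeys $2\big(\nabla\phi_F(t,\bar x)\cdot\nabla\xi(\bar x)\big)\xi(x)\gtrsim|\xi(x)|\sim d(x,\partial\Omega)$. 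For $x\in\Omega\setminus\Omega^\delta$ the integral is $\lesssim_{t,\varpi}C_{\delta'}^{-1}\int\sqrt{\mu}\,e^{\varpi\langle v\rangle t}\,dv\lesssim1$ because $\tilde\alpha\equiv C_{\delta'}$ there. For $x\in\Omega^\delta$ I would decompose $v=(v\cdot n(\bar x))\,n(\bar x)+v_\parallel$ and use the convexity \eqref{convexity_eta} of $\Omega$ (which gives $-2\big(v\cdot\nabla^2\xi(x)\cdot v\big)\xi(x)\gtrsim C_\Omega|v_\parallel|^2|\xi(x)|$ up to an error absorbed by $|v\cdot\nabla\xi(x)|^2\gtrsim|v\cdot n(\bar x)|^2$) to obtain, for $\delta$ small enough, $\beta(t,x,v)^2\gtrsim|v\cdot n(\bar x)|^2+\langle v\rangle^2 d(x,\partial\Omega)$, hence $\tilde\alpha(t,x,v)=\chi_{\delta'}(\beta(t,x,v))\gtrsim\min\big\{C_{\delta'},\ |v\cdot n(\bar x)|+\langle v\rangle\,d(x,\partial\Omega)^{1/2}\big\}$. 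Then, splitting into normal and tangential velocity variables and using $\sqrt{\mu(v)}\,e^{\varpi\langle v\rangle t}\lesssim_{t,\varpi}e^{-|v|^2/8}$, the $(v\cdot n)$-integral contributes $\int_{|v_n|\lesssim1}\big(|v_n|+\langle v\rangle\,d(x,\partial\Omega)^{1/2}\big)^{-1}dv_n\lesssim1+|\log d(x,\partial\Omega)|$ (the remaining range being controlled by $C_{\delta'}^{-1}$), while the tangential integral is $O(1)$. This is a time-integration-free, $\beta=1$ avatar of the kernel estimate \eqref{kernelbddfor1overalpha}.

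The hard part is precisely this last estimate — extracting the lower bound $\tilde\alpha\gtrsim|v\cdot n|+\langle v\rangle\,d(x,\partial\Omega)^{1/2}$ near $\partial\Omega$, equivalently the uniform (logarithmic) integrability of $\tilde\alpha^{-1}$ in $v$. This is where the conductor boundary condition is genuinely used: through the Hopf-lemma-generated sign condition \eqref{signEonbdry} it puts the positive term $\sim d(x,\partial\Omega)$ inside $\beta$ that keeps $\int_{\mathbb R^3}\tilde\alpha^{-1}\,dv$ only logarithmically divergent in $d(x,\partial\Omega)$ — rather than a power of $d(x,\partial\Omega)^{-1}$, or outright $+\infty$ as it would be at a boundary point where $\tilde\alpha\sim|v\cdot n|$ — and $|\log d(\cdot,\partial\Omega)|^p$ is integrable over $\Omega$ for every $p$, which is exactly what powers the $L^p_x$-bound on $\nabla_x\rho$ and therefore the $C^2$ estimate.
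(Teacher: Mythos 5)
Your proposal is correct and follows essentially the same route as the paper: elliptic Schauder plus Morrey reduce \eqref{c2bdofpotential} to an $L^p_x$ bound on $\int_{\mathbb R^3}e^{\varpi\langle v\rangle t}\sqrt{\mu}\,\tilde\alpha^{-1}\,dv$, and the Hopf-lemma sign condition \eqref{signEonbdry} supplies the positive term of size $d(x,\partial\Omega)$ inside $\beta$ that makes $\tilde\alpha^{-1}$ integrable in $v$ near $\partial\Omega$ (the first estimate \eqref{linfinitybdofpotential} being routine in both). The only, harmless, difference is bookkeeping: you estimate the velocity integral at exponent one and get a logarithmic divergence $1+|\log d(x,\partial\Omega)|$, whereas the paper writes $\tilde\alpha^{-1}\lesssim\tilde\alpha^{-\beta}+1$ for some $\beta>1$ and cites the computation \eqref{int1overalphadv} from \cite{VPBEP}, obtaining the power $|\xi(x)|^{-(\beta-1)/2}$, which is in $L^p_x$ provided $\beta<1+\tfrac{2}{p}$.
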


\begin{proof}
It is obvious to have (\ref{linfinitybdofpotential}) from the Morrey inequality and elliptic estimate.
Next we show (\ref{c2bdofpotential}).
By Schauder estimate, we have, for $p >3$ and $\Omega \subset \mathbb R^3$,
\[
\| \nabla^2 \phi_F(t) \|_{L^\infty(\O)} \le \| \phi_F \|_{C^{2, 1 - \frac{3}{p}}(\O)} \lesssim_{p,\Omega} \| \int_{\mathbb R^3} f (t) \sqrt \mu dv \|_{C^{0, 1 - \frac{3}{p}}(\O)}.
\]
Then by Morrey inequality, $W^{1,p} \subset C^{0, 1 -\frac{3}{p}} $ with $p >3$ for a domain $\Omega \subset \mathbb R^3$ with a smooth boundary $\partial \Omega$, we derive
\[ \begin{split}
 \| & \int_{\mathbb R^3} f (t) \sqrt \mu dv \|_{C^{0, 1 - \frac{3}{p}}}   \lesssim  \| \int_{\mathbb R^3} f (t) \sqrt \mu dv \|_{W^{1,p} }
 \\ 
  & \lesssim  \| w_\vartheta f(t) \|_\infty +  \| e^{-\varpi \langle v \rangle t } \tilde \alpha \nabla_{x} f (t) \|_\infty \| \int_{\mathbb R^3 } e^{ \varpi \langle v \rangle t } \sqrt \mu \frac{1}{\tilde \alpha} dv \|_{L^p(\Omega ) }.
\end{split} \]


It suffices to show that for some $\beta > 1$, 
\Be \label{int1alphaLpbdd}
\| \int_{\mathbb R^3 } e^{-\frac{1}{8} |v|^2 }  \frac{1}{\tilde \alpha^\beta} dv \|_{L^p(\Omega ) } < \infty.
\Ee
 Since $\tilde \alpha$ is bounded from below when $x$ is away from the boundary of $\Omega$, it suffices to only consider the case when $x$ is close enough to $\partial \Omega$. From direct computation (see \cite{VPBEP}), we get
\begin{equation} \label{int1overalphadv}
 \int_{\mathbb R^3 } e^{-\frac{1}{8} |v|^2 }  \frac{1}{\tilde \alpha^\beta} dv \lesssim \frac{1}{(\xi(x)^2 - 2 E(t,\bar x ) \cdot \nabla \xi(\bar x) \xi(x) )^{\frac{\beta -1 }{2}}} \lesssim \frac{1}{ |\xi(x) | ^{\frac{\beta -1 }{2}}}.
\end{equation}
%
And since $\xi$ is $C^2$, we have 
\[
\int_{d(x,\partial \Omega) \ll 1 } \frac{1}{ |\xi(x) | ^{\frac{(\beta -1 )p}{2}}} dx \lesssim \int_{d(x,\partial \Omega) \ll 1 } \frac{1}{ |x - \bar x | ^{\frac{(\beta -1 )p}{2}}} dx.
\]
Now from (\ref{convexity_eta}), 
\[
\int_{\Omega \cap B(p;\delta_2 ) }  \frac{1}{|x - \bar x  | ^{\frac{(\beta -1 )p}{2}}} dx \lesssim \int_{ |x_n | < \delta_1 } \frac{1}{|x_n|^{\frac{(\beta -1 )p}{2}}} d_{x_n} < \infty,
\]
if we pick $\beta < \frac{2}{p}+1 $. And since $\partial \Omega$ is compact, we can cover $\partial \Omega$ with finitely many such balls, and therefore we get (\ref{int1alphaLpbdd}).

\end{proof}

\begin{proof} [Proof of Theorem \ref{WlinftyVPBthm}]
For the sake of simplicity we only show the a priori estimate. See \cite{CKL} for the construction of the sequences of solutions and passing a limit.


The proof of \eqref{linfinitybddsolution} for $f$ satisfying \eqref{VPBsq1}, \eqref{VPBsq3}, and \eqref{fbdry} is standard. We refer to Theorem $4$ in \cite{VPBEP}.



First from \eqref{VPBsq3} and the fact that $ \int_{\mathbb R^3 } \sqrt \mu f dv \ge 0$, we apply Lemma \ref{Hopf} to get
\Be \label{hopfm}
 - \frac{  \p \phi (t,x)}{\p n} \ge c \iint_{\O \times \mathbb R^3} \sqrt \mu  f(t,x,v)  \delta(x) dv dx,
\Ee
for some $c$ depending only on $\O$. 

Denote
\[
 \iint_{\O \times \mathbb R^3}  F_0(x,v)  \delta(x) dv dx = c_{E_0}.
\]
Then $\int_0^T \iint_{\O \times \mathbb R^3 }\delta(x) \times \eqref{Boltzmann_E} \ dv dx dt$ gives
\[ \begin{split}
&  \iint_{\O \times \mathbb R^3}F(T,x,v)  \delta(x) dv dx  
 \\ &  =   \iint_{\O \times \mathbb R^3}  F_0(x,v)  \delta(x) dv dx + \int_0^T \iint_{\O \times \mathbb R^3 } F v \cdot \nabla_x \delta(x)  dv dx dt.
\end{split} \]
Together with \eqref{linfinitybddsolution} and \eqref{hopfm} we deduce
\Be \label{signphim}
 - \frac{  \p \phi (t,x)}{\p n} \ge c \iint_{\O \times \mathbb R^3}F(t,x,v)  \delta(x) dv dx >  \frac{c c_{E_0}}{2},
\Ee
as long as $ T \lesssim \frac{ c_{E_0}}{2 M}$.

Next, we investigate \eqref{seqforc1fixedpotential}. Since
\[ \begin{split}
 w_{\vartheta } \Gamma_{gain}(\partial f,f )  
 \lesssim \| e^{2\vartheta' |v|^2 } f \|_\infty \int_{\mathbb R^3 } \frac{ e^{-C_{\vartheta'} |u -v | ^2 } } {|u -v |^{2 -\kappa} }  |e^{\vartheta ' |u|^2 } \partial f(t,x,u ) | du,
\end{split} \]
and
\[ \begin{split}
w_{\vartheta } \nu(\sqrt{\mu} \partial f ) f 
\lesssim  \| e^{2 \vartheta' |v|^2 } f \|_\infty  \int_{\mathbb R^3 } \frac{ e^{-C_{\vartheta'} |u -v | ^2 } } {|u -v |^{2 -\kappa} } | \partial f(t,x,u ) | du.
\end{split} \]
Thus from (\ref{linfinitybddsolution}) we have the following bound for $\mathcal N$:

\begin{equation} \label{Nbdd}\begin{split}
|  \mathcal   N(t,x,v) | 
  \lesssim &   (1 + \| \nabla^2 \phi \|_\infty)  [ P(\| w_\vartheta f_0 \|_\infty )  + |w_{\vartheta'}  e^{-\varpi \langle v \rangle t  }  \tilde \alpha  \partial f(t,x,v) | ]
\\ & +  \| w_\vartheta f_0 \|_\infty  e^{-\varpi \langle v \rangle t }  \tilde \alpha(t,x,v) \int_{\mathbb R^3 }  \frac{ e^{-C_\vartheta |u -v | ^2 } } {|u -v |^{2 -\kappa} } | e^{\vartheta ' |u|^2 }\partial f(t,x,u ) | du.
\end{split} \end{equation}

Recall the definition of $\nu_\varpi$ in \eqref{nuvarpi}, note that from the velocity lemma (\ref{vlemma}), and (\ref{signphim}) we have
\[ \begin{split}
& \tilde \alpha^{-1}  ( \partial_t \tilde \alpha + v \cdot \nabla_x \tilde \alpha -\nabla \phi \cdot \nabla_v \tilde \alpha ) 
 \\ \lesssim & ( \| \nabla \phi \|_\infty + \| \nabla^2 \phi \|_\infty ) \langle v \rangle
 \\ \lesssim  & (\| w_{\vartheta'} f(t) \|_\infty + \| e^{-\varpi \langle v \rangle t } \tilde \alpha \nabla_{x} f (t) \|_\infty)\langle v \rangle
\\  \lesssim & (P(\| w_{\vartheta} f_0\|_\infty ) + \| \tilde \alpha \partial f_0 \|_\infty)\langle v \rangle.
   \end{split} \]
Therefore we have
\Be \label{largeomegabar}
\nu_\varpi \ge  \frac{\varpi}{2} \langle v \rangle,
\Ee
once we choose $\varpi \gg 1$ large enough.
   
For $t^1 < 0$, using the Duhamel's formulation we have from (\ref{seqforc1fixedpotential})

\begin{equation}  \label{}
\begin{split}
 w_{\vartheta'} & e ^{-\varpi \langle v \rangle t  }  \tilde \alpha | \partial  f (t,x,v) |
\\   \le &  e^{ -\int_s^t \nu_{\varpi} (\tau, X(\tau), V(\tau) d\tau}  e^{\vartheta' |V(0)|^2 } \tilde \alpha \partial f (0, X(0) , V(0) ) 
\\ & + \int_0^t e^{ -\int_s^t \nu_{\varpi} (\tau, X(\tau), V(\tau) d\tau } \mathcal N(s,X(s), V(s) ) ds.
\end{split} 
\end{equation}
Thus by (\ref{Nbdd}) we have
\[ \begin{split}
 \sup_{0 \le t \le T} & \| \textbf{1}_{ \{ t^1 < 0 \}} e^{-\varpi \langle v \rangle t } w_{\vartheta'} \tilde \alpha  \partial  f (t,x,v) \|_\infty
\\ \le & \sup_{0 \le t \le T}  \| e^{ -\int_0^t \nu_{\varpi} (\tau, X(\tau), V(\tau) d\tau} e^{\vartheta' |V(0)|^2 } \tilde \alpha \partial f (0, X(0) , V(0) ) 
\\ & + \int_0^t e^{ -\int_s^t \nu_{\varpi} (\tau, X(\tau), V(\tau) d\tau } \mathcal N(s,X(s), V(s) ) ds \|_\infty
 \\ \le & \| w_{\vartheta'} \tilde \alpha \partial f_0 \|_\infty + P(\| w_{\vartheta} f_0 \|_\infty)   \sup_{0 \le t \le T}  \| w_{\vartheta'} e^{-\varpi \langle v \rangle t  } \tilde \alpha  \partial  f (t,x,v) \|_\infty
 \\ & +T   (1 + \| \nabla^2 \phi \|_\infty) [ P(\| w_{\vartheta} f_0 \|_\infty) +   \sup_{0 \le t \le T}  \| w_{\vartheta'}  e^{-\varpi \langle v \rangle t  } \tilde \alpha  \partial  f (t,x,v) \|_\infty] 
 \\ & \times \int_0^t \int_{\mathbb R^3} e^{ -\int_s^t\frac{\varpi}{2} \langle V(\tau;t,x,v) \rangle d\tau }  \frac{e^{-\varpi  \langle (s;t,x,v) \rangle  s }}{e^{-\varpi  \langle u \rangle s}} \frac{e^{-C_\vartheta |V(s)-u|^2 }}{|V(s) -u |^{2 - \kappa}  }  \frac{\tilde \alpha(s,X(s) ,V(s))}{\tilde \alpha(s,X(s) ,u)} du  ds.
\end{split} \]
Now since
$
 \langle u \rangle -  \langle V(s;t,x,v) \rangle \le 2 \langle u - V(s;t,x,v) \rangle,
$
we have
$
 \frac{e^{-\varpi  \langle (s;t,x,v) \rangle  s }}{e^{-\varpi  \langle u \rangle s}} e^{-C_\vartheta |V(s)-u|^2 }
\lesssim   e^{ -\frac{ C_\vartheta |V(s) - u | ^2}{2}}.
$
Thus
\begin{equation} \label{genlemma1toget}
\begin{split}
\int_0^t & \int_{\mathbb R^3} e^{ -\int_s^t\frac{\varpi}{2} \langle V(\tau;t,x,v) \rangle d\tau }  \frac{e^{-\varpi  \langle (s;t,x,v) \rangle  s }}{e^{-\varpi  \langle u \rangle s}}  \frac{e^{-C_\vartheta |V(s)-u|^2 }}{|V(s) -u |^{2 - \kappa}  }  \frac{\tilde \alpha(s,X(s) ,V(s))}{\tilde \alpha(s,X(s) ,u)} du  ds
\\ \lesssim &    \int_0^t \int_{\mathbb R^3} e^{ -\int_s^t\frac{\varpi}{2} \langle V(\tau;t,x,v) \rangle d\tau }  \frac{e^{-\frac{C_\vartheta}{2} |V(s)-u|^2 }}{|V(s) -u |^{2 - \kappa}  }  \frac{\tilde \alpha(s,X(s) ,V(s))}{\tilde \alpha(s,X(s) ,u)} du  ds.
\end{split} 
\end{equation}
Note that, for any $\beta > 1$,
$
\frac{1}{\tilde \alpha(x, X(s) ,u ) } \lesssim \frac{1}{ (\tilde \alpha (x, X(s) , u ) )^\beta } + 1.
$
So from (\ref{signphim}) we can let $1 < \beta \le 2$, and apply the nonlocal-to-local estimate (\ref{nonlocaltolocal}) to (\ref{genlemma1toget}) to have
\begin{equation}
\begin{split}
 \int_0^t  & \int_{\mathbb R^3} e^{ -\int_s^t\frac{\varpi}{2} \langle V(\tau;t,x,v) \rangle d\tau }  \frac{e^{-\varpi  \langle (s;t,x,v) \rangle  s }}{e^{-\varpi  \langle u \rangle s}} \frac{e^{-C_\vartheta |V(s)-u|^2 }}{|V(s) -u |^{2 - \kappa}  }  \frac{\tilde \alpha(s,X(s) ,V(s))}{\tilde \alpha(s,X(s) ,u)} du  ds
\\ \lesssim &  e^{ C ( \| \nabla \phi \|_\infty^2 + \| \nabla^2 \phi \|_\infty )}   \left( \frac{ \delta^{\frac{3 - \beta}{2}} (\tilde \alpha(t,x,v) )^{3 - \beta } }{ (|v|^2 + 1 )^{\frac{3 -\beta}{2}} } + \frac{ (|v| + 1 )^{\beta - 1} (\tilde \alpha(t,x,v) )^{2 - \beta }}{ \delta ^{\beta - 1} \varpi \langle v \rangle  }   \right)
\\ \lesssim &  e^{ C ( \| \nabla \phi \|_\infty^2 + \| \nabla^2 \phi \|_\infty  ) }  \left( \delta^{\frac{3 - \beta}{2}} + \frac{1}{\delta ^{\beta - 1} \varpi} \right),
\end{split}
\end{equation}
where we used $\tilde \alpha(s,X(s) ,V(s)) \lesssim e^{ C ( \| \nabla \phi \|_\infty^2 + \| \nabla^2 \phi \|_\infty ) }  \tilde \alpha(t,x,v)$.

Similarly, for $t^1(t,x,v) \ge 0 $, we again apply the nonlocal-to-local estimate (\ref{nonlocaltolocal}) to get
\[
\begin{split}
| &  \textbf{1}_{ \{ t^1 > 0 \}}  w_{\vartheta'} e^{-\varpi \langle v \rangle t } \tilde \alpha  \partial  f (t,x,v) |
\\ \lesssim & C_l e^{Cl t^2 } \left( \delta^{\frac{3 - \beta}{2}} + \frac{1}{\delta ^{\beta - 1} \varpi}  \right)  P(\| w_{\vartheta} f_0 \|_\infty) \max_{ 0 \le i \le l-1 } e^{ C ( \| \nabla \phi \|_\infty^2 + \| \nabla^2 \phi \|_\infty  )}   
\\ & \times  \sup_{0 \le t \le T}  \| w_{\vartheta'} e^{-\varpi \langle v \rangle t } \tilde \alpha  \partial  f (t,x,v) \|_\infty
\\ & +  T(1 + \|\nabla^2 \phi \|_\infty) \sup_{0 \le t \le T}  \| w_{\vartheta'} e^{-\varpi  \langle v \rangle t } \tilde \alpha  \partial  f (t,x,v) \|_\infty 
\\  + &  T  l (Ce^{Ct^2 } )^l  (1 + \|\nabla^2 \phi \|_\infty)  \sup_{0 \le t \le T} \| w_{\vartheta'} e^{-\varpi \langle v \rangle t } \tilde \alpha  \partial  f (t,x,v) \|_\infty  
\\ + & Tl (Ce^{Ct^2 } )^l    (1 + \|\nabla^2 \phi  \|_\infty )P(\| w_{\vartheta} f_0 \|_\infty) 
+  l (Ce^{Ct^2 } )^l  \| \tilde \alpha \partial f_0  \|_\infty +   P(\| w_{\vartheta} f_0 \|_\infty) 
\\ & + C \left( \frac{1}{2} \right) ^l   \sup_{0 \le t \le T} \|w_{\vartheta'} e^{-\varpi \langle v \rangle t} \tilde \alpha  \partial  f (t,x,v) \|_\infty.
\end{split}
\]

Finally from \eqref{c2bdofpotential}, we can choose a large $l$ then large $C$ then small $\delta$ then large $\varpi$ and finally small $T$ to conclude 
\[ \begin{split}
 \sup_{0 \le t \le T}   \| e^{-\varpi \langle v \rangle t } \tilde \alpha  \partial  f (t,x,v) \|_\infty
\le  \frac{C_1}{2} \left(  \| w_{\vartheta } \tilde \alpha \partial f_0 \|_\infty + P ( \| w_\vartheta f_0 \|_\infty ) \right)
\end{split} \]
This proves (\ref{weightedC1bddsolution}).

Next we prove \eqref{L3L1plusbddsolution}. Consider taking $\nabla_v$ derivative of (\ref{VPBsq1}) and adding the weight function $e^{-\varpi \langle v \rangle t}$, we get
\Be\begin{split}\label{eqtn_g_v}
			&[\p_t + v\cdot \nabla_x - \nabla_x \phi \cdot \nabla_v + \frac{v}{2} \cdot \nabla_x \phi + \varpi \langle v \rangle - \frac{v}{ \langle v \rangle } \varpi t \cdot \nabla_x \phi + \nu(\sqrt \mu f ) ]  (e^{-\varpi \langle v \rangle t} \nabla_v f )
			\\ =& e^{-\varpi \langle v \rangle t} \left( - \nabla_v \nu(\sqrt \mu f ) f - \nabla_x f - \frac{1}{2} \nabla_x\phi f + \nabla_v \Gamma_{\text{gain}}(f,f) \right),
		\end{split}\Ee
with the boundary bound for $(x,v) \in\gamma_-$
		\Be\label{bdry_g_v}
		\big|\nabla_v f  \big| \lesssim   |v| \sqrt{\mu} \int_{n \cdot u>0} |f| \sqrt{\mu} \{n \cdot u \} \dd u \ \ \text{on } \ \gamma_-.
		\Ee
And 
\[
 \frac{v}{2} \cdot \nabla_x \phi + \varpi \langle v \rangle - \frac{v}{ \langle v \rangle } \varpi t \cdot \nabla_x \phi + \nu(\sqrt \mu f ) > \frac{\varpi }{2} \langle v \rangle,
 \]
 for $\varpi \gg 1$.

		Using the Duhamel's formulation, from (\ref{eqtn_g_v}) we obtain the following bound along the characteristics
		\begin{eqnarray}
		& \,\,\, & |e^{-\varpi \langle v \rangle t }  \nabla_v  f(t,x,v)|  \nonumber
		\\
		& \le &   \mathbf{1}_{ \{ \tb(t,x,v)> t \}}  
		 e^{ -\int_0^t  - \frac{C}{2}\langle V(\tau) \rangle d\tau } |\nabla_v f(0,X(0;t,x,v), V(0;t,x,v))|\label{g_initial}\\
		& + &   \ \mathbf{1}_{ \{ \tb(t,x,v)<t \} }
		 e^{-\varpi \langle \vb \rangle \tb} \mu(\vb)^{\frac{1}{4}}  \int_{n(\xb) \cdot u>0} 
		| f(t-\tb, \xb, u) |\sqrt{\mu} \{n(\xb) \cdot u\} \dd u\label{g_bdry}\\
		&  +&   \int^t_{\max\{t-\tb, 0\}} 
		 e^{ -\int_s^t  - \frac{\varpi}{2}\langle V(\tau) \rangle d\tau } e^{-\varpi \langle V(s) \rangle s }  
		  |\nabla_x f(s, X(s),V(s))|
		\dd s\label{g_x}\\
		&   + &\int^t_{\max\{t-\tb, 0\}} \label{g_Gamma}
		(1+ \| w_{\vartheta'} f \|_\infty  )
		e^{ -\int_s^t  - \frac{\varpi}{2}\langle V(\tau) \rangle d\tau }  e^{-\varpi \langle V(s) \rangle s } 
		\\ \notag && \,\,\,  \times \int_{\R^3} \frac{e^{-C_{\vartheta'} |V(s) - u |^2 }}{ |V(s) - u | ^{2 -\kappa } } \nabla_v f(s,X(s),u)| \dd u 
		\dd s\label{g_K}\\
		& +  & \label{nablaphigint}
		\| w_{\vartheta'} f\|_\infty \int^t_{\max\{t-\tb, 0\}} 
		  e^{ -\int_s^t  - \frac{\varpi}{2}\langle V(\tau) \rangle d\tau } e^{-\varpi \langle V(s) \rangle s } e^{-\vartheta' |V(s) |^2 }
		  \\&& \notag \times |\nabla_x \phi (s, X(s;t,x,v))| 
		\dd s. \label{g_phi}
		\end{eqnarray}
%
We first have
		\Be\begin{split}\label{est_g_initial}
			&\| (\ref{g_initial})\|_{L^3_x L^{1+ \delta}_v}\\
			\lesssim & \left(
			\int_{\O}
			\left(\int_{\R^3} |e^{\vartheta' |V(0) | ^2 } \nabla_v f(0,X(0 ), V(0 ))|^3 
			\right)
			\left(
			\int_{\R^3} e^{-(1+ \delta) \frac{3}{2-\delta} \vartheta' |V(0) | ^2 } \dd v \right)^{\frac{2-\delta}{1+ \delta}}
			\right)^{1/3} \\
			\lesssim & \
			\left(\iint_{\O \times \R^3} |e^{\vartheta' |V(0) | ^2 } \nabla_v f(0,X(0;t,x,v), V(0;t,x,v))|^3 \dd v \dd x\right)^{1/3}
			\\
		 \lesssim & \ \| w_{\vartheta'} \nabla_v f (0) \|_{L^3_{x,v}},
		\end{split}
		\Ee
		where we have used a change of variables $(x,v) \mapsto (X(0;t,x,v), V(0;t,x,v))$.

		\hide
		Also we use $|V(0;t,x,v)| \gtrsim |v|$ for $|v|\gg 1$, from (\ref{decay_phi}), and hence $\tilde{w}(V(0;t,x,v))^{- (1+ \delta) \frac{3}{2-\delta}} \in L_v^1 (\R^3)$.\unhide
		
		Clearly 
		\Be\label{est_g_bdry}
		\|(\ref{g_bdry})\|_{L^3_x L^{1+ \delta}_v}  \lesssim \sup_{0 \leq s \leq t} \| w_{\vartheta'} f (s) \|_\infty.
		\Ee
		
		From $ \| \nabla_x \phi \|_{L^3} \lesssim \| \phi \|_{W^{2,2}_x}$ for a bounded $\O \subset \R^3$, and the change of variables $(x,v) \mapsto (X(s;t,x,v), V(s;t,x,v))$ for fixed $s\in(\max\{t-\tb,0\},t)$,
		\Be
		\begin{split}\label{est_g_phi}
			\|  (\ref{nablaphigint})\|_{L^3_x L^{1+ \delta}_v} 
			 \lesssim  &  \| w_{\vartheta'} f\|_\infty \int^t_{\max\{t-\tb,0\}} \| \phi (s) \|_{W^{2,2}_{x} } 
			\\
			\lesssim & \ \| w_{\vartheta'} f\|_\infty \int^t_{\max\{t-\tb,0\}} \| \int_{\mathbb R^3} \sqrt \mu f(s) dv  \|_{2}.
			 \lesssim  t  \| w_{\vartheta'} f\|_\infty \| w_{\vartheta'} f \|_\infty .
		\end{split}
		\Ee

%
%

		
		Next we have from (\ref{int1overalphadv}), for $\frac{3\delta}{ 2 (1+\delta) } < 1$, equivalently $0 < \delta < 2 $,
		{
				\Be\label{init_p_xf}
		\begin{split}
			 \|(\ref{g_x})\|_{L^3_x L^{1+ \delta}_v}  \le &\left\|\left\| \int^t_{\max\{t-\tb, 0\}}
			\nabla_x f(s,X(s),V(s)) \dd s
			\right\|_{L_{v}^{1+ \delta}(  \R^3)}\right\|_{L^{3}_x}\\
			= & \ \left\|\left\| \int^t_{\max\{t-\tb, 0\}}    \frac{ e^{\vartheta' |V(s) |^2 } e^{-\varpi \langle V(s) \rangle s} \tilde \alpha \nabla_x f(s,X(s),V(s))}{e^{\vartheta' |V(s) |^2 } e^{-\varpi \langle V(s) \rangle s} \tilde \alpha}
			\dd s
			\right\|_{L_{v}^{1+ \delta}(  \R^3)}\right\|_{L^{3}_x}
			\\
			\le & \sup_{0 \le t \le T} \ \left\|  w_{\vartheta'} e^{-\varpi \langle v \rangle t} \tilde \alpha \nabla_x f \right\|_\infty 
			\\
			& \times \left\|
			 \left\| \int^t_{\max\{t-\tb, 0\}}  \frac{e^{- \vartheta' |V(s) |^2 } e^{\varpi \langle V(s) \rangle s }}{\tilde \alpha(s, X(s), V(s) )}		\dd s
\right\|_{L_{v}^{1+\delta}( \R^3)}\right\|_{L^{3}_x}\\
			\lesssim &   e^{C ( \| \nabla \phi \|_\infty + \| \nabla \phi \|_\infty^2 +\| \nabla ^2 \phi \|_\infty) }
 \sup_{0 \le t \le T} \ \left\|  w_{\vartheta'} e^{-\varpi \langle v \rangle t } \tilde \alpha \nabla_x f \right\|_\infty  
					\\ & \times  t  \int_{\Omega }  \left( \int_{\mathbb R^3 }   \frac{e^{- \frac{\vartheta'}{2} |v |^2 }}{(\tilde \alpha (t, x, v ))^{1+\delta}}  \dd v \right)^{\frac{3}{1+\delta}}  \dd x			
		\\ \lesssim & t e^{C (  \| \nabla \phi \|_\infty^2 +\| \nabla ^2 \phi \|_\infty) }  \sup_{0 \le t \le T} \ \left\|  w_{\vartheta'} e^{-\varpi \langle v \rangle t } \tilde \alpha \nabla_x f \right\|_\infty.
		\end{split}\Ee
		}
		 		
Next, we consider (\ref{g_Gamma}). From the computations in (\ref{int1overalphadv}), and using the fact that $\frac{1}{\tilde \alpha} \lesssim \frac{1}{\tilde \alpha^\beta}$, we have
{
\begin{equation} \label{nonlocall3l1est} \begin{split}
 & \|(\ref{g_Gamma})\|_{L^3_x L^{1+ \delta}_v}  
 \\ \le & \left\|\left\| \int^t_{\max\{t-\tb, 0\}}e^{ -\int_s^t  - \frac{\varpi}{2}\langle V(\tau) \rangle d\tau }  e^{-\varpi \langle V(s) \rangle s } \right. \right. 
\\ & \left. \left.  \quad \quad \quad \quad   \quad \quad \quad \quad \times  \int_{\R^3} \frac{e^{-C_{\vartheta'} |V(s) - u |^2 }}{ |V(s) - u | ^{2 -\kappa } } \nabla_v f(s,X(s),u)| \dd u \dd s
			\right\|_{L_{v}^{1+ \delta}(  \R^3)}\right\|_{L^{3}_x}
\\ \lesssim &e^{C \| \nabla \phi \|_\infty }   \sup_{0 \le t \le T} \ \left\|  w_{\vartheta'} e^{-\varpi \langle v \rangle t } \tilde \alpha \nabla_x f \right\|_\infty 
\\ & \times \left\|\left\| \int^t_{\max\{t-\tb, 0\}}e^{ -\int_s^t  - \frac{\varpi}{2}\langle V(\tau) \rangle d\tau } 
 \int_{\R^3} \frac{e^{-C_{\vartheta'} |V(s) - u |^2 }}{ |(s) - u | ^{2 -\kappa } } \frac{e^{- \frac{\vartheta'}{2} |u|^2 }  }{(\tilde \alpha(s,X(s),u))^\beta} \dd u \dd s
			\right\|_{L_{v}^{1+ \delta}(  \R^3)}\right\|_{L^{3}_x}.
\end{split}
\Ee
And then applying the nonlocal-to-local estimate \eqref{nonlocaltolocal} to \eqref{nonlocall3l1est} , we conclude

\Be
\begin{split}
 & \|(\ref{g_Gamma})\|_{L^3_x L^{1+ \delta}_v}   
 \\ \lesssim &e^{C (\| \nabla \phi \|_\infty   +  \| \nabla^2 \phi \|_\infty)}  \sup_{0 \le t \le T} \ \left\|  w_{\vartheta'} e^{- \varpi \langle v \rangle t } \tilde \alpha \nabla_x f \right\|_\infty 
\\ & \times \left\|\left\|
\frac{  \delta ^{\frac{3 -\beta}{2} }}{(\tilde \alpha(t,x,v))^{\beta -2 }  (|v|^2 + 1 )^{\frac{3 -\beta}{2}}}  + \frac{ (|v| +1)^{\beta -1} } { \delta ^{\beta -1 } \varpi \langle v \rangle  ( \tilde \alpha ( t,x,v) )^{\beta - 1 } }			\right\|_{L_{v}^{1+ \delta}(  \R^3)}\right\|_{L^{3}_x}
\\ \lesssim &e^{C (\| \nabla \phi \|_\infty   +  \| \nabla^2 \phi \|_\infty)}  \sup_{0 \le t \le T} \ \left\|  w_{\vartheta'} e^{-\varpi \langle v \rangle t } \tilde \alpha \nabla_x f \right\|_\infty 
\\ & \times \left( O(\delta^{\frac{3-\beta}{2}} )   + \frac{1}{ \delta ^{\beta -1 } \varpi}  \left\|\left\|
   \frac{ 1} { \langle v \rangle^{2 -\beta}  ( \tilde \alpha ( t,x,v) )^{\beta - 1 } }			\right\|_{L_{v}^{1+ \delta}(  \R^3)}\right\|_{L^{3}_x} \right)
\\ \lesssim & C(\delta^{\frac{3-\beta}{2}}  +  \frac{1}{ \delta ^{\beta -1 } \varpi}  )e^{C (\| \nabla \phi \|_\infty  +  \| \nabla^2 \phi \|_\infty)}  \sup_{0 \le t \le T} \ \left\|  w_{\vartheta'} e^{-\varpi \langle v \rangle t } \tilde \alpha \nabla_x f \right\|_\infty,
\end{split} \end{equation}
}
for $\beta$ satisfies $\frac{  (\beta-1)(1+\delta) -1}{2} \frac{3}{1+\delta} < 1$, which is equivalent to $\beta< \frac{5}{3} + \frac{1}{1+\delta}$. Therefore any $1 < \beta < \frac{5}{3}$ would work.

		Collecting terms from (\ref{g_initial})-(\ref{nablaphigint}), and (\ref{est_g_initial}), (\ref{est_g_bdry}), (\ref{est_g_phi}), (\ref{init_p_xf}), (\ref{nonlocall3l1est}), we derive
		\Be\begin{split}\label{bound_nabla_v_g}
			& \sup_{0 \leq s \leq t}\| e^{-\varpi \langle v \rangle t }  \nabla_vf(s) \|_{L^3_xL^{1+ \delta}_v} \\
	\lesssim & \| w_{\vartheta'} \nabla_v f (0) \|_{L^3_{x,v}} +  \| w_{\vartheta'} f \|_\infty )^2 +  \| w_{\vartheta'} f \|_\infty
	\\ < & \infty.
			\end{split} \Ee
This proves (\ref{L3L1plusbddsolution}) and conclude Theorem \ref{WlinftyVPBthm}.

\end{proof}

\section{Acknowledgements}This work was supported in part by National Science Foundation under Grant No. 1501031, Grant No. 1900923, and the Wisconsin Alumni Research Foundation.

\end{document}